\numberwithin{equation}{section}
\newtheorem{thm}{Theorem}[section]
\newtheorem{cor}[thm]{Corollary}
\newtheorem{lemme}[thm]{Lemma}
\newtheorem{prop}[thm]{Proposition}
\theoremstyle{definition}
\newtheorem{definition}[thm]{Definition}
\theoremstyle{remark}
\newtheorem{rmq}[thm]{Remark}
\def \Var {\textrm{Var}}
\newcommand{\N}{\mathbb{N}}
\newcommand{\R}{\mathbb{R}}
\newcommand{\1}{\mathds{1}}
\newcommand{\Proba}{\mathbb{P}} 
\newcommand{\E}{\mathbb{E}} 
\newcommand{\cvg}[3][ ]{ \underset{#2 \to #3}{\overset{#1}{\longrightarrow}}}
\DeclareMathOperator{\e}{e}
\newcommand{\mlaw}{\mathbf{m}}
\newcommand{\prcs}{Z}
\newcommand{\rnw}{M}
\newcommand{\espMes}{\mathcal{M}^1}
\begin{document}
\date{\today}

\title[Asymptotic deviation bounds for cumulative processes]{Asymptotic deviation bounds for cumulative processes}

\author[P. Cattiaux]{\textbf{\quad {Patrick} Cattiaux  \, }}
\address{{\bf {Patrick} CATTIAUX},\\ Institut de Math\'ematiques de Toulouse. CNRS UMR 5219. \\
	Universit\'e Paul Sabatier,
	\\ 118 route
	de Narbonne, F-31062 Toulouse cedex 09.} \email{patrick.cattiaux@math.univ-toulouse.fr}

\author[L. Colombani]{\textbf{\quad {Laetitia} Colombani  \, }}
\address{{\bf {Laetitia} COLOMBANI},\\ Institut de Math\'ematiques de Toulouse. CNRS UMR 5219. \\
	Universit\'e Paul Sabatier,
	\\ 118 route
	de Narbonne, F-31062 Toulouse cedex 09.} \email{laetitia.colombani@math.univ-toulouse.fr}

\author[M. Costa]{\textbf{\quad {Manon} Costa  \, }}
\address{{\bf {Manon} COSTA},\\ Institut de Math\'ematiques de Toulouse. CNRS UMR 5219. \\
	Universit\'e Paul Sabatier,
	\\ 118 route
	de Narbonne, F-31062 Toulouse cedex 09.} \email{manon.costa@math.univ-toulouse.fr}

\maketitle

\begin{center}
	\textsc{Universit\'e de Toulouse}
	\smallskip
\end{center}
\begin{abstract}
The aim of this paper is to get asymptotic deviation bounds via a Large Deviation Principle (LDP) for cumulative processes also known as compound renewal processes or renewal-reward processes. These processes cumulate independent random variables occurring in time interval given by a renewal process. Our result extends the one obtained in \cite{lefevere_large_2011} in the sense that we impose no specific dependency between the cumulated random variables and the renewal process and the proof uses \cite{mzsanov}. In the companion paper \cite{cattiaux_costa_colombani} we apply this principle to Hawkes processes with inhibition. Under some assumptions Hawkes processes are indeed cumulative processes, but they do not enter the framework of \cite{lefevere_large_2011}.
\end{abstract}
\bigskip

\textit{ Key words : Cumulative processes, large deviation, deviation inequalities, Hawkes processes}  

\bigskip

\textit{ MSC 2010 :  60F10,  	60K15} .
\bigskip

\section{Introduction.}
\subsection{Cumulative processes}
Cumulative processes have been introduced by Smith \cite{smith_1955} and are applied in many purposes, such as finance where they are called \emph{compound-renewal} processes or \emph{renewal-reward} processes. Indeed these continuous time processes cumulate independent random variables occurring in time interval given by a renewal process. To be more specific a real valued process $(Z_t)_{t\ge0}$ is called \emph{a cumulative process} if the following properties are satisfied: 
\begin{enumerate}
	\item $Z_0 = 0$, 
	\item there exists a renewal process $(S_i)_{i\ge0}$ such that for any $i$, $(Z_{S_i+t} - Z_{S_i})_{t \geq 0}$  is independent of $S_0, ... S_i$ and $(Z_s)_{s < S_i}$, 
	\item the distribution of $(Z_{S_i+t} - Z_{S_i})_{t \geq 0}$ is independent of $i$. 
\end{enumerate}
To study such processes, we write for all $t\geq 0$
$$Z_t = W_0(t) + W_1 + ... + W_{\rnw_t} + r_t,$$ 
where $W_0(t) = Z_{t \wedge S_0}$, $(W_i)_{i \geq 1}$ are i.i.d. random variables defined by $ W_i = Z_{S_i} - Z_{S_{i-1}},$ and $r_t$ is the remaining part $r_t = Z_t - Z_{\rnw_t}$ where $\rnw_t$ is the integer defined by $$\rnw_t = \sup \left\lbrace i\geq 0, S_i \leq t \right\rbrace. $$
We denote by $(\tau_i)_{i\ge1}$ the waiting times associated to the renewal process $\tau_i = S_i - S_{i-1}$. It is worth noticing that $\tau_i$ and $W_i$ can be dependent.

In the sequel we suppress the subscript $i$ when dealing with the distribution (and all associated quantities like expectation, variance ...) of $(\tau_i,W_i)$ and simply use $(\tau,W)$.
\medskip

A simple example of cumulative process is $Z_t = \int_{0}^{t} f(X_s) ds$ where $(X_t)_{t\ge0}$ is a regenerative process with i.i.d. cycles \cite{glynn_limit_1993}. Markov additive processes are other classical examples of cumulative process. 
In \cite{costa} the authors exhibited a renewal structure for some Hawkes processes. This description is extensively used in our companion paper \cite{cattiaux_costa_colombani} in order to describe such processes as cumulative processes, and to study their asymptotic behaviour. 
\medskip

For $\mathbb R$-valued cumulative processes, the law of large numbers (assuming that $\E[|W|]$ and $\E[\tau]$ are not infinite)
\begin{align*}
\frac{Z_t}{t} \cvg[a.s.]{t}{\infty} \frac{\E[W]}{\E[\tau]} \text{ if and only if } \E\left( \max_{S_0 \leq t < S_1} |r_t|  \right) < \infty \, ,
\end{align*}
and the central limit theorem (assuming $\Var (W)< \infty$ and $\Var (\tau) < \infty$)
\begin{align*}
\frac{\left(Z_t- t \frac{\E[W]}{\E[\tau]} \right)}{\sqrt{t}} \cvg{t}{\infty} \mathcal{N}\left(0, \sigma^2\right) \text{ where } \sigma^2= \frac{1}{\E(\tau)} \Var\left(W- \frac{\E[W]}{\E[\tau]} \tau\right)
\end{align*}
can be found in Asmussen \cite{asmussen}, theorem 3.1 and theorem 3.2.\\
Brown and Ross \cite{brown_asymptotic_1972} have proved an equivalent of Blackwell's  theorem and of the key renewal theorem for a subclass of cumulative processes, since cumulative processes are a generalization of renewal processes. Glynn and Whitt have focused in \cite{glynn_limit_1993} on cumulative processes associated to a regenerative process and have proved law of large numbers (strong and weak), law of the iterated logarithm, central limit theorem and functional generalizations of these properties.
\medskip

The aim of this work is to obtain asymptotic bounds in order to build confidence intervals. To this end we are looking at a large deviation principle (LDP) for cumulative processes. Some works have already been done. For instance, Duffy and Metcalfe \cite{duffy_how_2005} have considered the estimation of a rate function for a cumulative process (if it admits a LDP). 

In a series of papers, Borovkov and Mogulskii (\cite{borovkov_large_2015}, \cite{borovkov_large_2016_1}, \cite{borovkov_large_2016_2}) have studied the LDP (they use the term compound-renewal process), under some Cramer type assumptions. Actually, some points in their approach are not clear for us. After the submission of the present paper, Zamparo posted on ArXiv a preprint, now published in \cite{zamparo}, that extends Borovkov-Mogulskii approach, and is based on Cramer's theory. The same author had previously studied in \cite{zamparo2} the case of a discrete valued $\tau$.

Another possible approach based on a higher level LDP, namely at the level of empirical measures, was developed by Lefevere, Mariani and Zambotti \cite{lefevere_large_2011}. In this work they study specific cumulative processes where $W_i = F(\tau_i)$ for some deterministic function $F$ which is assumed to be non-negative, bounded and continuous. In a first version of this paper, we have extended their method to general pairs $(\tau,W)$ in $\mathbb R^+\times \mathbb R$. As suggested by the referee, our intricate proof can be simplified by using the Sanov type theorem obtained by Mariani and Zambotti in \cite{mzsanov}, what we shall do in the present work. Actually the proofs in \cite{mzsanov} greatly simplifies and extends the corresponding result for the empirical measure in\cite{lefevere_large_2011} (as well as our previous proof of this result).
\medskip

In this paper, we look at a LDP for $Z_t/t$ in the case $r_t = 0$ and $S_0=0$. This assumption can be relaxed if $r_t/t$ tends to $0$ quickly enough, as it will be the case for the application to Hawkes process (see \cite{cattiaux_costa_colombani}), we shall briefly recall. For example, if for all $\delta>0$
\begin{align*}
\limsup_{t \to \infty} \frac{1}{t} \log \Proba\left(\frac{|r_t|}{t} > \delta \right) = - \infty,
\end{align*}
then $Z_t/t$ and $(Z_t-r_t)/t$ are exponentially equivalent. They then admit the same asymptotic deviation bounds. 
\medskip

\subsection{Motivation: Application to Hawkes processes.}
A Hawkes process is a point process on the real line $\R$ characterized by its intensity process $t \mapsto \Lambda(t)$. We consider an appropriate filtered probability space $(\Omega, \mathcal{F}, (\mathcal{F}_t)_{t \geq 0}, \Proba)$ satisfying the usual assumptions. 
\begin{definition}
	\label{def:Hawkes}
Let $\lambda >0$ and $h: (0, +\infty) \rightarrow \R$ a signed measurable function. Let $N^0$ a locally finite point process on $(- \infty, 0]$ with law $\mlaw$. \\
	The point process $N^h$ on $\R$ is a Hawkes process on $(0, +\infty)$, with initial condition $N^0$ and reproduction measure $\mu(dt) = h(t) dt$ if: 
	\begin{itemize}
		\item $N^h\mid_{(-\infty, 0]} = N^0$,
		\item the conditional intensity measure of $N^h\mid_{(0, +\infty)}$ with respect to $(\mathcal{F}_t)_{t \geq 0}$ is absolutely continuous w.r.t the Lebesgue measure and has density:
		\begin{equation} \label{eq_intensite}
		\Lambda^h : t \in (0, +\infty) \mapsto f\left( \lambda + \int_{(-\infty, t)} h(t-u) N^h(du) \right) \, .
		\end{equation}
	\end{itemize}
	for some non-negative function $f$.
\end{definition}

Hawkes processes have been introduced by Hawkes \cite{hawkes71}. Most of the literature concerned with the large time behaviour of $N^h_t = N^h([0,t])$ is dedicated to the case $h\geq 0$ (self excitation).  This behaviour is studied in details in \cite{cattiaux_costa_colombani} when $h$ is a signed (the negative part modelling self inhibition) compactly supported function, and the function $f$ (called the jump rate function) is given by $$f(u) = \max(0,u) \, . $$ 
In this situation one gets a description of $N_t^h$ as a cumulative process (see \cite{cattiaux_costa_colombani} subsection 2.3) with few information on the joint law of $(\tau,W)$. This was the initial motivation for the present work. In particular, controlling the asymptotic deviation from the mean, in this framework with unbounded $W_i$'s, can lead to asymptotic confidence intervals. We refer to Corollary 2.13  \cite{cattiaux_costa_colombani} for a more complete overview and explicit results in this situation. We shall discuss this situation later.
\medskip

\section{Notations and main result}\label{sec_LDP_notandaim}
\subsection{First notations. \\ \\} \label{subsec_LDP_firstnotations}
We consider $(\tau_i, W_i)_{i\ge1}$ an i.i.d. sequence of pairs of random variables built on some probability space $(\Omega,\mathcal F,\mathbb P)$ with values in $[0,+\infty]\times \mathbb R$. Actually we are mainly interested in the case where $W$ takes non-negative values which is the case for Hawkes processes. 

The law of $(\tau_i, W_i)$ is an arbitrary probability measure $\psi$ on $(0, +\infty) \times \mathbb R$. We denote this by: $(\tau_i, W_i) \sim \psi$. In the sequel we generically use the notation $(\tau,W)$ for a pair with the same distribution as $(\tau_i,W_i)$. Notice that we thus assume that $$\psi(\tau=0)=\psi(\tau=+\infty)=0$$ which is Assumption (A1) in \cite{mzsanov}, implying in in particular that $\mathbb E(\tau)>0$.

We denote by $\espMes(\mathcal{X})$ the space of probability measure on some measurable space $(\mathcal{X},\mathcal G)$.
\medskip

We consider the renewal process associated with $(\tau_i)_{i\geq 1}$ : 
\begin{align*}
S_0&=0, \hspace{5 pt } S_n = \sum_{i=1}^{n} \tau_i,\\
\rnw_t &= \sup \left\lbrace n\geq 0, S_n \leq t \right\rbrace.
\end{align*}
We will study the quantity: 
\begin{equation}\label{eqz}
\prcs_t = \sum_{i=1}^{\rnw_t} W_i,
\end{equation}
where as usual an empty sum is equal to $0$.

The first main goal of this paper is to prove a Large Deviation Principle for the process $(Z_t/t)_{t\ge0}$. Let us recall some basic definitions in large deviation theory (we refer to \cite{dembo_large_2010}).

A family of probability measures $(\eta_t)_{t\ge0}$ on a topological space $(\mathcal X,T_{\mathcal X})$ equipped with its Borel $\sigma$-field, satisfies the Large Deviations Principle (LDP) with rate function $J(.)$ and speed $\gamma(t)=t$ if $J$ is lower semi-continuous from $\mathcal X$ to $[0,+\infty]$, and the following holds
\begin{equation}\label{eqLDP1}
- \inf_{x \in \mathcal O} \, J(x) \, \leq \, \liminf_{t \to +\infty} \, \frac{1}{t} \, \log \eta_t(\mathcal O) \quad \textrm{for all open subset $\mathcal O$,}
\end{equation}
and
\begin{equation}\label{eqLDP2}
- \inf_{x \in \mathcal C} \, J(x) \, \geq \, \limsup_{t \to +\infty} \, \frac{1}{t} \, \log \eta_t(\mathcal C) \quad \textrm{for all closed subset $\mathcal C$.}
\end{equation}
We shall say that $(\eta_t)_{t\ge0}$ satisfies the \textit{full} LDP when \eqref{eqLDP1} and \eqref{eqLDP2} are satisfied, while we will use \textit{weak} LDP when $\mathcal C$ closed is replaced by $\mathcal C$ compact in \eqref{eqLDP2}. When $\eta_t$ is the distribution of some random variable $Y_t$ (for instance $Z_t/t$) we shall say that the family $(Y_t)_{t\ge0}$ satisfies a LDP.

Since $J$ is lower semi-continuous the level sets $\{x\in\mathcal X, J(x)\leq a\}$ are closed. If in addition they are compact, then $J$ is said to be a good rate function.

In this paper we only consider the speed function $\gamma(t)=t$ so that we will no more refer to it.
\medskip

A particularly important notion for our purpose is the notion of \textit{exponentially good approximation}. 
\begin{definition}\label{defexpgood}
	Assume that $(\mathcal X,d)$ is a metric space. A family of random variables $\{(Y_{n,t})_{t\ge0}\}_{ n\in\N}$ is an exponentially good approximation of $(Y_t)_{t\ge0}$ (all these variables being defined on the same probability space $(\Omega,\mathbb P)$), if for all $\delta>0$ it holds $$\lim_{n \to \infty} \, \limsup_{t \to \infty} \, \frac 1t \, \log \mathbb P(d(Y_{n,t},Y_t)>\delta) \, = \, -\infty \, .$$
\end{definition}
The key result is then
\begin{thm}\label{thmexpgood}
	In the framework of definition \ref{defexpgood}, assume that $\{(Y_{n,t})_{t\ge0}\}_{ n\in\N}$ is an exponentially good approximation of $(Y_t)_{t\ge0}$. Then the following statements hold true.
	\begin{enumerate}
		\item If $\{(Y_{n,t})_{t\ge0}\}_{ n\in\N}$ satisfies a full LDP with rate function $J^n$ then $(Y_t)_{t\ge0}$ satisfies a weak LDP with rate function 
$$J(x) = \sup_{\delta>0} \, \liminf _{n \to \infty} \, \inf_{d(y,x)<\delta} \, J^n(y) \, .$$
		\item If $\mathcal X$ is locally compact, then the same conclusion is true when $\{(Y_{n,t})_{t\ge0}\}_{ n\in\N}$ satisfies only a weak LDP.
		\item If $J$ (defined above) is a good rate function such that for any closed set $F$, 
		$$\inf_{y \in F} J(y) \leq \limsup_{n \to \infty} \, \inf_{y \in F} J^n(y)\,,$$
		 then $(Y_t)_{t\ge0}$ satisfies a full LDP with rate function $J$.
	\end{enumerate}
\end{thm}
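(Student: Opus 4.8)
Everything rests on one elementary inclusion and one bookkeeping identity. For any Borel set $A\subseteq\mathcal X$, any $\delta>0$ and any $n,t$, writing $A^{\delta}=\{y:\ d(y,A)<\delta\}$ for the open $\delta$-enlargement,
$$\mathbb P(Y_t\in A)\le \mathbb P\big(Y_{n,t}\in A^{\delta}\big)+\mathbb P\big(d(Y_{n,t},Y_t)\ge \delta\big),\qquad \mathbb P(Y_{n,t}\in A)\le \mathbb P\big(Y_{t}\in A^{\delta}\big)+\mathbb P\big(d(Y_{n,t},Y_t)\ge \delta\big),$$
and for nonnegative $a_t,b_t$ one has $\limsup_t\tfrac1t\log(a_t+b_t)=\max\big(\limsup_t\tfrac1t\log a_t,\ \limsup_t\tfrac1t\log b_t\big)$. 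Since $\{(Y_{n,t})_t\}_n$ is an exponentially good approximation, $\limsup_t\tfrac1t\log\mathbb P(d(Y_{n,t},Y_t)\ge\delta)\to-\infty$ as $n\to\infty$, so the error terms above vanish at exponential scale once one lets $n\to\infty$; the whole proof is then an exercise in the order in which $t\to\infty$, $n\to\infty$ and $\delta\to 0$ are taken. This is, in substance, the approximation argument of \cite{dembo_large_2010}.

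\emph{Lower bound.} Let $\mathcal O$ be open, $x\in\mathcal O$ and $\delta>0$ with $B(x,2\delta)\subseteq\mathcal O$. The second inclusion with $A=B(x,\delta)$ gives $\mathbb P(Y_{n,t}\in B(x,\delta))\le\mathbb P(Y_t\in\mathcal O)+\mathbb P(d(Y_{n,t},Y_t)\ge\delta)$; applying the LDP lower bound for $(Y_{n,t})_t$ on the ball $B(x,\delta)$ and letting $n\to\infty$ yields $\liminf_t\tfrac1t\log\mathbb P(Y_t\in\mathcal O)\ge-\liminf_n\inf_{d(y,x)<\delta}J^n(y)$, and taking the supremum over $\delta>0$ and then over $x\in\mathcal O$ produces exactly $-\inf_{x\in\mathcal O}J(x)$ with $J$ as in the statement. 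This uses only the lower half of the LDP for the $(Y_{n,t})_t$, hence is available in cases (1) and (2).

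\emph{Upper bound.} The key is a local estimate: for every $x$ and every $\alpha<J(x)$ there is $\delta>0$ with $\limsup_t\tfrac1t\log\mathbb P(Y_t\in B(x,\delta))\le-\alpha$. Indeed, by definition of $J$ one fixes $\delta_0$ with $\inf_{B(x,\delta_0)}J^n>\alpha$ for all large $n$; then the first inclusion with $A=B(x,\delta_0/4)$, the upper bound of the LDP for $(Y_{n,t})_t$ applied to the closed ball $\overline{B(x,\delta_0/2)}\subseteq B(x,\delta_0)$, and the fact that the left-hand side is independent of $n$ (so one may pass to $\inf_n$, killing the error term) give the claim — in case (1) via the closed-set upper bound for $(Y_{n,t})_t$, and in case (2) via the compact-set one only, local compactness allowing the radius to be chosen so that $\overline{B(x,\delta_0/2)}$ is compact. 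A finite subcover of a compact $K$ by such balls then gives $\limsup_t\tfrac1t\log\mathbb P(Y_t\in K)\le-\inf_{x\in K}J(x)$, so with the lower bound $(Y_t)_t$ satisfies the weak LDP with rate $J$. For (3) one runs the same scheme directly on a closed set $F$: with $\overline F^{\,\delta}=\{y:d(y,F)\le\delta\}$ closed, the first inclusion, the closed-set upper bound for $(Y_{n,t})_t$ and $n\to\infty$ give $\limsup_t\tfrac1t\log\mathbb P(Y_t\in F)\le-\limsup_n\inf_{\overline F^{\,\delta}}J^n\le-\inf_{\overline F^{\,\delta}}J$, the last inequality being the assumed comparison; letting $\delta\downarrow0$ and using that $J$ is good, so $\inf_{\overline F^{\,\delta}}J\uparrow\inf_F J$, yields the closed-set upper bound, i.e. the full LDP.

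The delicate point is the upper bound: the lower bound is a one-ball estimate with no obstruction, whereas pinning down the precise rate $J$ in the upper bound forces the right ordering of the three limits and a \emph{finite} subcover argument — an infinite cover would destroy the $\max$-of-$\limsup$ identity, which is precisely why (1) and (2) deliver only a weak LDP. The upgrade in (3) works only because the hypothesis $\inf_F J\le\limsup_n\inf_F J^n$ for closed $F$, combined with goodness of $J$, substitutes for the exponential tightness that would otherwise be required.
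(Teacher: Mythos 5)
Your proof is correct and is, in substance, the argument behind Dembo--Zeitouni Theorem 4.2.16, which the paper does not reproduce: the paper dispatches items (1) and (3) by direct citation and item (2) by the one-line remark that \enquote{closed balls are compact sets}. You have filled in the omitted proof along the same lines, and your treatment of (2) is in fact slightly more careful than the paper's phrasing: in a general locally compact metric space it is not true that every closed ball is compact (only that every point has a compact closed-ball neighborhood of small enough radius), and you correctly exploit the freedom to shrink the radius $\delta_0$ rather than relying on an unqualified statement about closed balls. The three-limit bookkeeping ($t\to\infty$, then $n\to\infty$, then $\delta\to0$) and the finite-subcover step in the upper bound are handled correctly, and the argument that $\inf_{\overline F^{\,\delta}}J\uparrow\inf_F J$ for $J$ good as $\delta\downarrow0$ is exactly what item (3) needs to upgrade from compact to closed sets.
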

The first and last points in the previous Theorem are contained in \cite{dembo_large_2010} Theorem 4.2.16. The second one is a consequence of the fact that closed balls are compact sets.
Usually, the Theorem is sufficient to prove a full LDP. Nevertheless, it some cases, the study of the rate function $J$ is difficult. The lemma below gives an alternative, using exponential tightness which is easy to obtain with our assumptions.

\begin{lemme}\label{lem_exptightLDP}
	If $(Y_t)_{t\ge0}$ satisfies a weak LDP with a rate function $I$ and is exponentially tight, i.e. for all $\alpha>0$, there exists a compact set $K_{\alpha}$ such that
	$$\limsup_{t \to \infty} \frac{1}{t} \log \Proba \left( Y_t \notin K_{\alpha}^c  \right) < - \alpha,$$
	then $(Y_t)_{t\ge0}$ satisfies a full LDP and $I$ is a good rate function. 
\end{lemme}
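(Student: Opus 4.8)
The plan is to retain the open-set lower bound \eqref{eqLDP1}, which is already part of the weak LDP, and to upgrade the upper bound \eqref{eqLDP2} from compact sets to arbitrary closed sets; exponential tightness is exactly the ingredient that bridges this gap. Fix a closed set $C$ and, for $\alpha>0$, let $K_\alpha$ be the compact set given by exponential tightness, so that $\limsup_{t\to\infty}\frac1t\log\Proba(Y_t\in K_\alpha^c)<-\alpha$, where $K_\alpha^c$ denotes the complement of $K_\alpha$. Write $\Proba(Y_t\in C)\le \Proba(Y_t\in C\cap K_\alpha)+\Proba(Y_t\in K_\alpha^c)$ and apply the elementary identity $\limsup_{t}\frac1t\log(a_t+b_t)=\max\!\left(\limsup_{t}\frac1t\log a_t,\ \limsup_{t}\frac1t\log b_t\right)$. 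Since $C\cap K_\alpha$ is a closed subset of a compact set, it is compact, so the weak LDP gives $\limsup_{t}\frac1t\log\Proba(Y_t\in C\cap K_\alpha)\le -\inf_{C\cap K_\alpha}I\le -\inf_{C}I$. Combining the two terms yields $\limsup_{t}\frac1t\log\Proba(Y_t\in C)\le \max\!\left(-\inf_{C}I,\,-\alpha\right)$, and letting $\alpha\to\infty$ produces \eqref{eqLDP2} for every closed $C$; together with the lower bound this is the full LDP.

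It then remains to check that $I$ is a good rate function, i.e. that each level set $\{x:\,I(x)\le a\}$ is compact. Such a set is already closed, since $I$ is lower semi-continuous, so it suffices to show it is contained in some compact set. To this end I would apply the \emph{lower} bound \eqref{eqLDP1} to the open set $K_\alpha^c$: this gives $-\inf_{K_\alpha^c}I\le \liminf_{t}\frac1t\log\Proba(Y_t\in K_\alpha^c)\le \limsup_{t}\frac1t\log\Proba(Y_t\in K_\alpha^c)<-\alpha$, hence $\inf_{K_\alpha^c}I>\alpha$. Consequently, as soon as $\alpha\ge a$ we have $\{I\le a\}\cap K_\alpha^c=\emptyset$, that is $\{I\le a\}\subseteq K_\alpha$; being a closed subset of a compact set, $\{I\le a\}$ is compact, so $I$ is good.

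I do not expect a genuine obstacle: this is the classical argument that a weak LDP upgrades to a full LDP with good rate function under exponential tightness (see \cite{dembo_large_2010}, Lemma 1.2.18). The only points needing a little care are the $\limsup$-of-a-sum identity (so as not to lose the maximum), the order in which the limits are taken (first $t\to\infty$, then $\alpha\to\infty$), and the degenerate case $\inf_{C}I=+\infty$, for which $\max(-\inf_{C}I,-\alpha)=-\alpha\to-\infty$ still delivers the claimed bound.
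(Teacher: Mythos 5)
Your proof is correct and is exactly the argument behind Lemma~1.2.18 of \cite{dembo_large_2010}, which is all the paper itself gives as a ``proof'' (a bare citation); you have simply written it out in full, including the closed-set upper bound via the decomposition $C=(C\cap K_\alpha)\cup(C\cap K_\alpha^c)$ and the goodness of $I$ via the lower bound on the open set $K_\alpha^c$. One incidental remark: the statement in the paper writes $\Proba(Y_t\notin K_\alpha^c)$, which literally reads $\Proba(Y_t\in K_\alpha)$; you correctly read through this typo and work with $\Proba(Y_t\in K_\alpha^c)$, which is the intended exponential-tightness condition.
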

This Lemma is a consequence of the Lemma 1.2.18 in \cite{dembo_large_2010}. 
\medskip

\subsection{Main results. }\label{subsecmainr}

Introduce the following quantities
\begin{equation}\label{eqbornes1}
\theta_0 := \sup_{\theta \geq 0 }\, \{ \,\E[\e^{\theta \tau}] < \infty\}  \, , \end{equation}
and
\begin{equation}\label{eqbornes2}
\eta_0 := \sup_{\eta\ge0}\, \{\, \E[\e^{\eta |W|}] < \infty\}  \, .
\end{equation}

Also introduce the classical Cramer transform, for $(a,b) \in \mathbb R^2$,
\begin{equation}\label{eqcramer1}
\Lambda^*(a,b) = \sup_{(x,y) \in\mathbb R^2} \, \{ax+by - \log \mathbb E(e^{x \tau +y W})\} \, . 
\end{equation}
We finally define, for $(m,\beta,x,y)\in \mathbb R^4$,
\begin{equation}\label{eqcramer2}
\Lambda(m,\beta,x,y)=x +my - \beta \, \log\mathbb E\left(e^{x \tau +y W}\right)
\end{equation}
 and the rate function $J$ for any $m\in \mathbb R$,
\begin{eqnarray}\label{eqrate}
J(m) &=& \inf_{\beta>0} \; \beta \, \Lambda^*\left(\frac 1\beta, \frac m\beta\right) \, , \nonumber \\ &=& \inf_{\beta>0} \, \sup_{x,y}\Lambda(m,\beta,x,y) \, .
\end{eqnarray}

We then may state
\begin{thm}\label{thmmain}
Assume that $\eta_0>0$ and $\theta_0>0$. Let $J$ given by \eqref{eqrate} and $\bar J$ defined as 
$$\begin{aligned}
&\bar J(m)=J(m) \text{ for } m\neq 0,\\
& \bar J(0) = \min(J(0),\theta_0)\,. 
\end{aligned}$$
\begin{itemize}
\item If $\eta_0=+\infty$ (in particular if $W$ is bounded) then $(Z_t/t)_{t\ge0}$ satisfies a full LDP with good rate function $\bar{J}$. 
\item If $\eta_0 < +\infty$, denoting $m=\mathbb E(W)/\mathbb E(\tau)$ we have for all $a>0$ and $\kappa \in (0,1)$
\begin{equation}\label{eq_ineqgeneral_J1}
\limsup_{t \to \infty} \, \frac 1t \, \log \mathbb P\left(\frac{Z_t}{t} \geq m +a\right) \leq - \, \min\left[\inf_{z\geq m+\kappa a} \bar J(z) \; , \;  \frac{\eta_0 a(1-\kappa)}{4}  \right] \, ,
\end{equation}
and similarly
\begin{equation}\label{eq_ineqgeneral_J2}
\limsup_{t \to \infty} \, \frac 1t \, \log \mathbb P\left(\frac{Z_t}{t} \leq m -a\right) \leq - \, \min\left[\inf_{z\leq m-\kappa a} \bar J(z) \; , \; \frac{\eta_0 a(1-\kappa)}{4}    \right] \, .
\end{equation} 
\end{itemize}
\end{thm}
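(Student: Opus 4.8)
The plan is to obtain the principle from the Sanov-type large deviation result of \cite{mzsanov} for the empirical measure of the renewal--reward structure, combined with the contraction principle and a truncation procedure. The underlying picture is the following: in order to force $Z_t/t\simeq m$, the process should perform $\simeq\beta t$ renewals for some $\beta>0$, each pair $(\tau_i,W_i)$ being tilted so that $(S_n,Z_{S_n})/n\simeq(1/\beta,m/\beta)$ when $n\simeq\beta t$; the exponential cost of such a tilt of $n$ i.i.d.\ pairs is $\simeq n\,\Lambda^*(1/\beta,m/\beta)\simeq\beta t\,\Lambda^*(1/\beta,m/\beta)$ by Cram\'er's theorem in $\mathbb R^2$, and optimizing over $\beta>0$ yields exactly $J$ of \eqref{eqrate} (equivalently $\inf_\beta\sup_{x,y}\Lambda(m,\beta,x,y)$).

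\textbf{Step 1: bounded rewards.} Suppose first that $|W|\le A$. Introduce the empirical measure $L_t:=\frac1t\sum_{i=1}^{\rnw_t}\delta_{(\tau_i,W_i)}$, a finite measure on $(0,+\infty]\times\mathbb R$ whose total mass is $\rnw_t/t$ and which satisfies $\int\tau\,dL_t=S_{\rnw_t}/t\le1$ and $\int w\,dL_t=Z_t/t$ (recall $r_t=0$, $S_0=0$). By \cite{mzsanov} the family $(L_t)_{t\ge0}$ obeys a full LDP, in a topology fine enough that the relevant bounded-continuous functionals are continuous, with an explicit (relative-entropy type) good rate function $\mathcal I$. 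Since $w$ is bounded here, $\nu\mapsto\int w\,d\nu$ is continuous and the contraction principle (\cite{dembo_large_2010}, Thm.~4.2.1) gives a full LDP for $Z_t/t$ with rate $m\mapsto\inf\{\mathcal I(\nu):\int w\,d\nu=m\}$. A variational computation — decomposing $\nu=\beta\tilde\nu$ with $\tilde\nu$ a probability measure and $\beta$ its total mass, using the Legendre duality between relative entropy and log-Laplace transforms, and keeping track of the mass that $\mathcal I$ allows to sit at $\tau=+\infty$ — identifies this infimum with $\bar J$; in particular the value $\bar J(0)=\min(J(0),\theta_0)$ emerges because $Z_t/t\simeq0$ is also achieved by letting $\rnw_t=o(t)$, at a cost governed by $\lim_{a\to\infty}\Lambda^*_\tau(a)/a=\theta_0$ (equivalently, when $W\ge0$, by the event $\{\tau_1>t\}$, for which $\tfrac1t\log\mathbb P(\tau_1>t)\to-\theta_0$).

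\textbf{Step 2: truncation.} For general $W$ with $\eta_0>0$, put $W_i^{(A)}:=(W_i\wedge A)\vee(-A)$ and $Z_t^{(A)}:=\sum_{i=1}^{\rnw_t}W_i^{(A)}$; Step 1 gives a full LDP for $Z_t^{(A)}/t$ with a good rate function $\bar J_A$, and one checks $\bar J_A\to\bar J$ as $A\to\infty$. If $\eta_0=+\infty$, the family $\{(Z_t^{(A)}/t)_{t\ge0}\}_{A\ge1}$ is an exponentially good approximation of $(Z_t/t)_{t\ge0}$: using $\psi(\tau=0)=0$ one first bounds $\mathbb P(\rnw_t>Kt)\le e^{-c_Kt}$ with $c_K\to+\infty$, and then $\mathbb P(|Z_t-Z_t^{(A)}|>\delta t)\le e^{-c_Kt}+\exp\!\big(-\eta\delta t+Kt\log\mathbb E\,e^{\eta(|W|-A)^+}\big)$ for every $\eta>0$; since $\log\mathbb E\,e^{\eta(|W|-A)^+}\to0$ as $A\to\infty$, letting $A\to\infty$ and then $\eta\to\infty$ makes the $\limsup$ equal $-\infty$. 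Theorem \ref{thmexpgood} then transfers the full LDP to $Z_t/t$ with rate function $\bar J$, which, being also exponentially tight (a consequence of $\psi(\tau=0)=0$ and $\eta_0>0$), is a good rate function by Lemma \ref{lem_exptightLDP}. If $\eta_0<+\infty$, the same decomposition is used one-sidedly: from $\{Z_t/t\ge m+a\}\subset\{Z_t^{(A)}/t\ge m+\kappa a\}\cup\{\sum_{i=1}^{\rnw_t}(|W_i|-A)^+\ge(1-\kappa)at\}$, the first event is controlled by the LDP of Step 1 for $Z_t^{(A)}/t$, yielding $\inf_{z\ge m+\kappa a}\bar J_A(z)\to\inf_{z\ge m+\kappa a}\bar J(z)$, and the second by the Chernoff bound above; optimizing the parameters ($\eta$ up to $\eta_0$, $A\to\infty$, and the split of $(1-\kappa)a$) produces the term $\tfrac{\eta_0a(1-\kappa)}4$, which gives \eqref{eq_ineqgeneral_J1}, and \eqref{eq_ineqgeneral_J2} is symmetric.

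\textbf{Main obstacle.} The essential difficulty lies in Step 1: converting the abstract entropy rate function furnished by \cite{mzsanov} into the closed form \eqref{eqrate} via Legendre duality, while correctly handling the topology on measures on $(0,+\infty]\times\mathbb R$ and, above all, the mass allowed to escape to $\tau=+\infty$ — it is exactly this escape that prevents $\bar J$ from being a naive contraction of Cram\'er's theorem and that is responsible for the correction $\bar J(0)=\min(J(0),\theta_0)$. By comparison, the truncation bookkeeping of Step 2 (the convergence $\bar J_A\to\bar J$ with interchange of infima, and the explicit constants $\kappa$ and $1/4$) is routine.
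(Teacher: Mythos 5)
Your overall architecture is the paper's: a Sanov-type LDP from \cite{mzsanov}, contraction, Legendre-duality identification of the rate, then truncation of $W$ with an exponential estimate driven by $\eta_0$. The truncation step is essentially Lemma \ref{lemreduc1} of the paper (your Chernoff bound on $\sum(|W_i|-A)^+$ combined with a renewal bound on $\{\rnw_t>Kt\}$ would in fact give a constant better than $1/4$, which is harmless). But Step 1, which you yourself flag as the essential difficulty, is not carried out, and the route you sketch for it would fail as written. The theorem of \cite{mzsanov} concerns the \emph{time-weighted} empirical measure $\mu_t=\frac1t\int_0^t\delta_{(\tau_{M_s+1},W_{M_s+1})}\,ds$, which is a probability measure and whose LDP lives on the vaguely compact set $\bar{\mathcal M}^1(\mathcal X)$ of sub-probability measures; your $L_t=\frac1t\sum_{i\le \rnw_t}\delta_{(\tau_i,W_i)}$ has total mass $\rnw_t/t$, which is unbounded, so it does not live in that space and the cited theorem does not apply to it. If one works with the correct $\mu_t$, the functional recovering $Z_t/t$ is $\nu(\varphi)$ with $\varphi(u,w)=w/u$ (plus a boundary term), and boundedness of $w$ is \emph{not} enough for vague continuity: $\varphi$ blows up as $u\to0$. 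The paper handles this with a further approximation $\tau\mapsto\tau+\varepsilon$ and an exponential-equivalence estimate comparing $M_t^\varepsilon$ with $M_t$ (all of Section \ref{secldcumul}); nothing in your argument plays this role.

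Two further points are asserted rather than proved, and neither is routine. First, the identification of the contracted entropy rate with \eqref{eqrate}, including the restriction of the supremum to $x\le\theta_0$ and the special value $\bar J(0)=\min(J(0),\theta_0)$ coming from the null measure (note $J$ itself is not lower semicontinuous at $0$, cf.\ Remark \ref{remlsc}, so "a variational computation identifies this infimum with $\bar J$" hides a real subtlety). Second, what Theorem \ref{thmexpgood} produces is $\widetilde J(m)=\sup_{\delta>0}\liminf_{A}\inf_{|z-m|<\delta}\bar J_A(z)$, not a plain limit of $\bar J_A$, and proving $\widetilde J=\bar J$ (and, for \eqref{eq_ineqgeneral_J1}, that $\limsup_A\inf_{z\ge m+\kappa a}\bar J_A(z)\ge\inf_{z\ge m+\kappa a}\bar J(z)$) requires showing that near-minimizing sequences $(\beta_A,z_A)$ stay bounded and bounded away from $\beta=0$, which is exactly where the hypotheses $\theta_0>0$ and $\eta_0>0$ enter (Lemmas \ref{lemtilde1}, \ref{lemtilde2} and \ref{lemdefin}). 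As it stands the proposal is a correct high-level plan with the central analytic steps missing or misapplied.
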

\medskip

\begin{rmq} 
\textbf{A short discussion.}
As we said in \cite{cattiaux_costa_colombani}, the direct Cramer's approach in e.g. \cite{zamparo} furnishes more general results but with a much less explicit rate function. 

In particular, contrary to \cite{zamparo}, when $\eta_0<+\infty$ we do not provide a LDP principle but asymptotic deviation bounds. These bounds are actually what is useful from a statistical point of view, since they allow to build confidence intervals around the asymptotic mean. 

Due to the fact that we are using the results in \cite{mzsanov}, the method we will develop here extends immediately to $W$ taking its values in $\mathbb R^k$ or even in a general infinite dimensional normed vector space, provided $\theta_0=+\infty$ in the latter case. Actually, most of the work in the present paper is about understanding the rate function, and giving a tractable form for it.
\hfill $\diamondsuit$
\end{rmq}
\medskip

\section{Large Deviations for the empirical measure.}\label{secempir}

Following \cite{mzsanov}, we introduce the empirical measure 
\begin{equation}\label{eqempir}
\mu_t := \, \frac 1t \, \int_{[0,t)} \, \delta_{(\tau_{M_s+1},W_{M_s+1})} \, ds\,,
\end{equation}
so that, considering $$\varphi(u,w)=\frac{w}{u}$$ one has
\begin{align}
\label{eqempircontrac}
\mu_t(\varphi)&:= \int\, \varphi \, d\mu_t 
= \frac{1}{t} \int_{0}^t \frac{W_{M_s+1}}{\tau_{M_s+1}} ds \nonumber\\
&= \frac{1}{t}  \sum_{i=1}^{M_t} \int_{S_{i-1}}^{S_i} \frac{W_{i}}{\tau_{i} }  ds +\frac{1}{t}\int_{S_{M_t}}^t \frac{W_{M_t+1}}{\tau_{M_t+1}} ds \nonumber\\
& = \, \frac {Z_t}{t} \, + \, \frac{t-S_{M_t}}{t} \, \frac{W_{M_t+1}}{\tau_{M_t+1}} \, ,
\end{align}
if the latter makes sense.

We will thus deduce a LDP for $(Z_t/t)_{t\ge0}$ from a LDP for $(\mu_t)_{t\ge0}$ and the contraction principle (\cite{dembo_large_2010} Theorem 4.2.1). The LDP for $(\mu_t)_{t\ge0}$ is precisely the aim of the work by Mariani and Zambotti \cite{mzsanov}. We have to introduce some more notations. 
\medskip

First, for the sake of simplicity we still assume that $\mathcal X=(0,+\infty)\times \mathbb R$ so that Assumption (A4) (i.e. $\mathcal X$ locally compact) in \cite{mzsanov} is satisfied. The generic point in $\mathcal X$ is denoted by $x=(u,w)$. The application denoted by $\tau$ in \cite{mzsanov} is thus simply $(u,w) \mapsto u$ in our setting.

This immediately implies that Assumption (A2) in \cite{mzsanov} is satisfied, since for all $x=(u,w)\in (0,+\infty)\times \mathbb R$ it holds $$\zeta(x)=\inf_{\delta>0} \, \sup\left\{c\geq 0 \, : \, \int_{B((u,w),\delta)} \, e^{cu'} \, \psi(du',dw') < +\infty\right\} \, = + \infty \, .$$ Assumption (A3) therein is equivalent to $\theta_0=+\infty$ and we shall not use it.
\medskip

The set of non-negative Radon measures on $\mathcal X$ with total mass less than or equal to $1$ is denoted by ${\bar{\mathcal M}}^1(\mathcal X)$. The main advantage of considering this set is that it is compact and Polish for the vague topology i.e. the weakest topology such that for any continuous and compactly supported $f$, the map $\nu \mapsto \int f \, d\nu:= \nu(f)$ is continuous. Recall that if $f$ is continuous, bounded and goes to $0$ at infinity (i.e. $\sup_{|x|>R} |f(x)| \to 0$ as $R \to \infty$), then the application $\nu \mapsto \nu(f)$ is continuous on ${\bar{\mathcal M}}^1(\mathcal X)$.\\
We denote by $\mathcal{M}^1(\mathcal X)$ the set of probability measures on $\mathcal X$.
In \cite{lefevere_large_2011} to $\nu \in {\bar{\mathcal M}}^1(\mathcal X)$ is associated the probability measure $$\widetilde{\nu}(dx)=\nu(dx) + (1-\nu(\mathcal X))\delta_\partial$$ where $\mathcal X \cup \partial$ denotes the one point compactification of $\mathcal X$. 

In both papers the authors then introduce, provided $0 < \nu(1/u):=\int \frac{1}{u} \, \nu(du,dw) < +\infty$,
\begin{equation}\label{eqpondere}
\bar \nu(dx)= \bar \nu(du,dw) := \frac{1}{\nu(1/u)} \, \frac{1}{u} \, \nu(du,dw) \, .
\end{equation}

Finally recall that if $\pi$ and $\pi'$ are probability measures on $\mathcal X$, the relative entropy of $\pi$ w.r.t. $\pi'$ is defined as
\begin{align*}
H(\pi|\pi') = \begin{dcases}
\int \log\left( \frac{d\pi}{d\pi'} \right) d\pi &\text{ if } \pi \text{ is absolutely continuous w.r.t. } \pi' \\
+ \infty &\text{ otherwise.}
\end{dcases}
\end{align*}

Since Assumptions (A1), (A2) and (A4) are satisfied, Proposition 1.5 and Theorem 1.6 in \cite{mzsanov} then imply in our framework
\begin{thm}\label{thmmz}
Define $I: {\bar{\mathcal M}}^1(\mathcal X) \to [0,+\infty]$ as 
\begin{equation}\label{eqI1}
I(\nu) = \left\{ \begin{aligned}
&\nu(1/u) H(\bar{\nu} | \psi) + (1-\nu(\mathcal X))\theta_0 \, ,\qquad \text{if }\,0<\nu(1/u)<+\infty\\
&\theta_0\,, \quad \text{if }\nu\text{ is the null measure}\\
&+\infty,\, \text{otherwise.}
\end{aligned}\right.
\end{equation}
Then $I$ is convex, is a good rate function and the family $(P_t)_{t\geq 0}$ of the probability distributions of $(\mu_t)_{t\ge0}$ satisfies a full LDP with rate function $I$ and speed $t$.
\end{thm}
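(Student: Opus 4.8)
The plan is to verify that the present setting satisfies the standing assumptions of Mariani and Zambotti \cite{mzsanov} and then to invoke their Proposition 1.5 and Theorem 1.6 directly; the only substantive work is to rewrite their rate function in the form \eqref{eqI1}. First I would record the hypotheses. Assumption (A1) of \cite{mzsanov}, that $\psi(\tau=0)=\psi(\tau=+\infty)=0$, is part of our definition of $\psi$. Assumption (A4), local compactness of $\mathcal X$, holds because $\mathcal X=(0,+\infty)\times\mathbb R$. Assumption (A2) requires $\zeta(x)=+\infty$ for every $x=(u,w)\in\mathcal X$, which is immediate since a sufficiently small ball $B((u,w),\delta)$ keeps the first coordinate $u'$ bounded, so that $\int_{B((u,w),\delta)}e^{cu'}\,\psi(du',dw')<+\infty$ for all $c\ge0$. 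I would stress that Assumption (A3) of \cite{mzsanov}, equivalent to $\theta_0=+\infty$, is \emph{not} needed for the LDP on the compact Polish space ${\bar{\mathcal M}}^1(\mathcal X)$ with the vague topology; it only intervenes later, for the unbounded-$W$ part of Theorem \ref{thmmain}.

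With (A1), (A2) and (A4) available, Theorem 1.6 of \cite{mzsanov} gives that $(\mu_t)_{t\ge0}$, for $\mu_t$ as in \eqref{eqempir}, satisfies a full LDP on ${\bar{\mathcal M}}^1(\mathcal X)$ at speed $t$ with a good rate function, while Proposition 1.5 of \cite{mzsanov} provides its convexity and lower semicontinuity together with its closed form. At this point one must check that the empirical measure used in \cite{mzsanov} is literally \eqref{eqempir} (their renewal clock is exactly $s\mapsto\tau_{M_s+1}$) and that the normalisations agree: the reweighting $\bar\nu$ of \eqref{eqpondere} is the one appearing in their formula, and the rate function equals $+\infty$ whenever $\nu(1/u)\in\{0,+\infty\}$ forces $\bar\nu$ to be ill-defined.

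It then remains to read off the three cases of \eqref{eqI1}. When $0<\nu(1/u)<+\infty$ the cost is the weighted relative entropy $\nu(1/u)\,H(\bar\nu\,|\,\psi)$ plus the escape-of-mass penalty $(1-\nu(\mathcal X))\,\theta_0$, the coefficient $\theta_0=\sup\{\theta\ge0:\mathbb E[e^{\theta\tau}]<\infty\}$ being exactly the exponential cost of a fraction $1-\nu(\mathcal X)$ of the interarrival times escaping to $+\infty$; when $\nu$ is the null measure only that penalty $\theta_0$ survives (total escape of mass); and in the remaining degenerate cases the rate function is $+\infty$. I do not anticipate a real obstacle: the whole argument is a transcription of \cite{mzsanov}, and the only points needing care are matching the measure-theoretic conventions of that paper and handling the boundary values $\nu(1/u)\in\{0,+\infty\}$ together with the escape term when $\theta_0$ is finite.
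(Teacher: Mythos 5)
Your proposal is correct and follows essentially the same route as the paper: verify that Assumptions (A1), (A2), (A4) of Mariani--Zambotti hold (with the same verification of (A2) via boundedness of the first coordinate on small balls), note that (A3) is not required, and then invoke Proposition 1.5 and Theorem 1.6 of that reference to read off the LDP with the rate function \eqref{eqI1}.
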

The specific case where $\nu$ is the null measure will play a special role. Notice that under our hypotheses the null measure is the only one such that $\nu(1/u)=0$.
\medskip

An immediate corollary can then be obtained using the contraction principle in a specific case.
\begin{cor}\label{corcontrac}
Assume in addition that there exists positive constants $K$ and $\varepsilon<1$ such that $\psi(|w|\leq K \textrm{ and } u \geq \varepsilon)=1$. Then, $(\mu_t(\varphi))_{t\ge0}$ satisfies a full LDP with the convex good rate function 
\begin{equation}\label{eqbarJ1}
\bar J(m)=\inf \, \{I(\nu) \, , \, \nu \in {\bar{\mathcal M}}^1(\mathcal X): \, \nu(\varphi)=m\} \, ,
\end{equation}
where as usual the infimum on an empty set is $+\infty$.
\end{cor}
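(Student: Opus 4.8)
The plan is to deduce Corollary \ref{corcontrac} from Theorem \ref{thmmz} by an application of the contraction principle (\cite{dembo_large_2010} Theorem 4.2.1). The obstacle is that the map $\nu\mapsto\nu(\varphi)$, with $\varphi(u,w)=w/u$, is \emph{not} continuous on the whole of ${\bar{\mathcal M}}^1(\mathcal X)$ for the vague topology: $\varphi$ is neither compactly supported nor vanishing at infinity (it blows up as $u\to 0$ and is unbounded in $w$). The extra assumption $\psi(|w|\leq K,\ u\geq\varepsilon)=1$ is precisely what repairs this. First I would observe that, by the form of the rate function $I$ in \eqref{eqI1}, any $\nu$ with $I(\nu)<+\infty$ and $\nu(1/u)>0$ satisfies $H(\bar\nu\,|\,\psi)<+\infty$, hence $\bar\nu\ll\psi$, hence $\bar\nu$ — and therefore $\nu$ — is supported in the compact rectangle $L:=\{|w|\leq K,\ u\geq\varepsilon\}$. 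Thus $I$ is $+\infty$ outside $\{\nu:\operatorname{supp}\nu\subseteq L\}\cup\{\text{null measure}\}$, and this latter set is a closed (hence compact) subset of ${\bar{\mathcal M}}^1(\mathcal X)$.

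Next I would restrict attention to the compact set $\mathcal K:=\{\nu\in{\bar{\mathcal M}}^1(\mathcal X):\operatorname{supp}\nu\subseteq L\}$. On $\mathcal K$ the map $\nu\mapsto\nu(\varphi)$ \emph{is} continuous for the vague topology: since $\varphi$ restricted to $L$ is bounded and continuous, one can multiply by a fixed continuous compactly supported cutoff $\chi$ with $\chi\equiv 1$ on $L$, so that $\nu(\varphi)=\nu(\varphi\chi)$ for all $\nu\in\mathcal K$ and $\varphi\chi\in\mathcal C_c(\mathcal X)$. I would also note the null measure lies in $\mathcal K$ with $\nu(\varphi)=0$ by the convention that an empty integral is zero; this is consistent with \eqref{eqempircontrac}. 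Since $\{\nu:I(\nu)<\infty\}\subseteq\mathcal K$ and $\mathcal K$ is compact, the family $(\mu_t)_{t\ge0}$ in fact satisfies the LDP concentrated on $\mathcal K$, and one may apply the contraction principle with the continuous map $G:\mathcal K\to\mathbb R$, $G(\nu)=\nu(\varphi)$. This yields that $(G(\mu_t))_{t\ge0}=(\mu_t(\varphi))_{t\ge0}$ satisfies a full LDP with good rate function $\bar J(m)=\inf\{I(\nu):\nu\in\mathcal K,\ \nu(\varphi)=m\}$, which coincides with \eqref{eqbarJ1} because $I=+\infty$ off $\mathcal K$.

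Finally, convexity of $\bar J$ follows from convexity of $I$ together with linearity of $\nu\mapsto\nu(\varphi)$: given $m_0,m_1$ and near-optimal $\nu_0,\nu_1$, the convex combination $\nu_\lambda=(1-\lambda)\nu_0+\lambda\nu_1$ lies in ${\bar{\mathcal M}}^1(\mathcal X)$, satisfies $\nu_\lambda(\varphi)=(1-\lambda)m_0+\lambda m_1$, and $I(\nu_\lambda)\leq(1-\lambda)I(\nu_0)+\lambda I(\nu_1)$, giving $\bar J((1-\lambda)m_0+\lambda m_1)\leq(1-\lambda)\bar J(m_0)+\lambda\bar J(m_1)$. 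Goodness of $\bar J$ is automatic: it is the image of a good rate function under a continuous map restricted to the compact effective domain (alternatively, the level sets $\{\bar J\leq a\}$ are the continuous images of the compact level sets $\{I\leq a\}$, hence compact). The only point demanding care is the continuity of $\nu\mapsto\nu(\varphi)$ discussed above — in the subsequent sections, where the boundedness assumption on $(\tau,W)$ is dropped, this is exactly where the exponentially good approximation machinery of Theorem \ref{thmexpgood} and truncation arguments will have to take over, so I would flag it explicitly here.
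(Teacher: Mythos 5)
Your overall strategy --- use the support assumption to tame the discontinuity of $\nu\mapsto\nu(\varphi)$ and then apply the contraction principle --- is exactly the paper's, but your key continuity step is justified incorrectly. The set $L=\{|w|\le K,\ u\ge\varepsilon\}=[\varepsilon,+\infty)\times[-K,K]$ is \emph{not} compact (it is unbounded in the $u$ direction), so there is no $\chi\in\mathcal C_c(\mathcal X)$ with $\chi\equiv1$ on $L$, and the identity $\nu(\varphi)=\nu(\varphi\chi)$ for $\nu\in\mathcal K$ cannot be arranged with a compactly supported cutoff. Moreover, boundedness of $\varphi$ on $L$ is not by itself sufficient for vague continuity of $\nu\mapsto\nu(\varphi)$ even on $\mathcal K$: mass can still escape to $u=+\infty$ inside $\mathcal K$ (e.g. $\delta_{(n,K)}\to0$ vaguely), so one must genuinely use that $|\varphi|\le K/u\to0$ as $u\to\infty$ on $L$. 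The conclusion you want is true, but the argument as written fails at this point. The paper's repair is to replace $\varphi$ by $\varphi_{K,\varepsilon}(u,w)=\frac{w}{u\vee\varepsilon}\,\eta_K(w)$, where $\eta_K$ is continuous with $\mathbf 1_{|w|\le K}\le\eta_K\le\mathbf 1_{|w|\le 2K}$: this function is continuous, bounded and vanishes at infinity on all of $\mathcal X$, so $\nu\mapsto\nu(\varphi_{K,\varepsilon})$ is vaguely continuous on the whole of ${\bar{\mathcal M}}^1(\mathcal X)$, the contraction principle applies directly without restricting the state space, and the resulting rate function coincides with $\bar J$ because any $\nu$ with finite rate has $\bar\nu\ll\psi$, hence is carried by $L$, where $\varphi=\varphi_{K,\varepsilon}$.

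Two smaller points. First, your restriction of the LDP to $\mathcal K$ requires not only $I=+\infty$ off $\mathcal K$ but also $\mathbb P(\mu_t\in\mathcal K)=1$ for every $t$ (true, since each atom $(\tau_i,W_i)$ lies in $L$ almost surely, and needed for the lower bound over relatively open sets); you should state this, just as the paper states the analogous $\mu_t(\varphi)=\mu_t(\varphi_{K,\varepsilon})$ a.s. Second, your arguments for convexity (convexity of $I$ plus linearity of $\nu\mapsto\nu(\varphi)$) and for goodness of $\bar J$ are correct.
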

\begin{proof}
Let $\eta_K$ be a continuous function such that $\mathbf 1_{|w|\leq K} \leq \eta_K(w) \leq \mathbf 1_{|w| \leq 2K}$. Introduce $$\varphi_{K,\varepsilon}(u,w)= \frac{w}{u\vee \varepsilon} \; \eta_K(w) \,  .$$
First remark that under our assumptions on $\psi$, $\mu_t(\varphi)=\mu_t(\varphi_{K,\varepsilon})$ almost surely. Since $\varphi_{K,\varepsilon}$ is continuous, bounded and goes to $0$ at infinity, $\nu \mapsto \nu(\varphi_{K,\varepsilon})$ is continuous. One can thus apply the contraction principle, yielding a full LDP with good rate function $$\bar J_{K,\varepsilon}(m)=\inf \, \{I(\nu) \, , \, \nu \in {\bar{\mathcal M}}^1(\mathcal X): \, \nu(\varphi_{K,\varepsilon})=m\} \, .$$ If one of $\bar J$ or $\bar J_{K,\varepsilon}$ is finite then $\nu$ is necessarily absolutely continuous w.r.t. $\psi$ (including the case of the null measure) so that $|W|\leq K$ and $\tau\geq \varepsilon$, $\nu$ almost everywhere.  Accordingly $\nu(\varphi)=\nu(\varphi_{K,\varepsilon})$ and $\bar J=\bar J_{K,\varepsilon}$.
\end{proof}

To obtain our main result, it remains to relax the boundedness assumptions on $\tau$ and $W$ and to compare $\bar J$ and $J$ defined in \eqref{eqbarJ1} and \eqref{eqrate}. The next result is a first step in this direction, removing the assumption on $\tau$.
\begin{prop}\label{propJegal}
Assume that there  exists a positive constant $K$ such that $\psi(|w|\leq K)=1$. Then
for $m \neq 0$, $\bar J(m)=J(m)$ while for $m=0$, $\bar J(0)=\min(J(0),\theta_0)$, where $J$ is defined in \eqref{eqrate}. 
\end{prop}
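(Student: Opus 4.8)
The plan is to read off both $\bar J(m)$ and $J(m)$ from one and the same change of variables. \textbf{A dictionary.} If $\nu\in\bar{\mathcal M}^1(\mathcal X)$ is not the null measure and $0<\nu(1/u)<+\infty$, set $\beta:=\nu(1/u)>0$ and $\pi:=\bar\nu\in\espMes(\mathcal X)$; then $\nu=\beta\,u\,\pi$, so that $\pi(\tau)=\nu(\mathcal X)/\beta\leq 1/\beta<+\infty$, $\nu(\varphi)=\beta\,\pi(W)$, and
\[
I(\nu)=\beta\,H(\pi|\psi)+\bigl(1-\beta\,\pi(\tau)\bigr)\,\theta_0\,.
\]
Conversely, any $\pi\in\espMes(\mathcal X)$ with $\pi(\tau)<+\infty$ and any $\beta\in(0,1/\pi(\tau)]$ produce, via $\nu:=\beta\,u\,\pi$, an element of $\bar{\mathcal M}^1(\mathcal X)$ realising these identities, and the null measure has $I=\theta_0$, $\nu(\varphi)=0$. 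Note that $\psi(|w|\leq K)=1$ forces $\pi(|W|)\leq K$ whenever $H(\pi|\psi)<+\infty$, so all the quantities $\pi(W)$, $\nu(\varphi)$ above are finite.

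\textbf{Lower bound.} Let $\nu\in\bar{\mathcal M}^1(\mathcal X)$ with $\nu(\varphi)=m$ and (without loss) $I(\nu)<+\infty$. If $\nu$ is null then $m=0$ and $I(\nu)=\theta_0\geq\min(J(0),\theta_0)$. Otherwise, with $\beta,\pi$ as above and $t:=\beta\,\pi(\tau)=\nu(\mathcal X)\in(0,1]$, $\beta\,\pi(W)=m$: for every $x<\theta_0$ and $y\in\R$ one has $\E[\e^{x\tau+yW}]<+\infty$ (since $W$ is bounded), and the variational lower bound for relative entropy applied to $g=x\tau+yW$ (legitimate because $\pi(\tau)<+\infty$) gives $\beta\,H(\pi|\psi)\geq x\,t+y\,m-\beta\log\E[\e^{x\tau+yW}]$. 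Adding $(1-t)\theta_0$ and using $(1-t)(\theta_0-x)\geq 0$ for $x\leq\theta_0$, $t\leq 1$, yields $I(\nu)\geq x+my-\beta\log\E[\e^{x\tau+yW}]=\Lambda(m,\beta,x,y)$. Since $\E[\e^{x\tau+yW}]=+\infty$ for $x>\theta_0$, we have $\sup_{x,y}\Lambda(m,\beta,x,y)=\sup_{x<\theta_0,\,y}\Lambda(m,\beta,x,y)$, whence $I(\nu)\geq\sup_{x,y}\Lambda(m,\beta,x,y)\geq J(m)$. Thus $\bar J(m)\geq J(m)$ for $m\neq 0$ and $\bar J(0)\geq\min(J(0),\theta_0)$.

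\textbf{Upper bound.} Fix $\beta>0$; if $\Lambda^*(1/\beta,m/\beta)=+\infty$ there is nothing to prove. Otherwise I would invoke the classical identity relating the Cram\'er transform to relative entropy, $\Lambda^*(a,b)=\inf\{H(\pi|\psi):\pi(\tau)=a,\ \pi(W)=b\}$ (see e.g. \cite{dembo_large_2010}); only the inequality $\inf_\pi H(\pi|\psi)\leq\Lambda^*(a,b)$ is needed here, and it follows by exponentially tilting $\psi$. Given $\varepsilon>0$, pick $\pi$ with $\pi(\tau)=1/\beta$, $\pi(W)=m/\beta$ and $H(\pi|\psi)\leq\Lambda^*(1/\beta,m/\beta)+\varepsilon$; then $\nu:=\beta\,u\,\pi\in\bar{\mathcal M}^1(\mathcal X)$ has $\nu(\mathcal X)=1$, $\nu(\varphi)=\beta\,\pi(W)=m$ and $I(\nu)=\beta\,H(\pi|\psi)\leq\beta\,\Lambda^*(1/\beta,m/\beta)+\beta\varepsilon$. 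Letting $\varepsilon\downarrow 0$ and then taking the infimum over $\beta>0$ gives $\bar J(m)\leq J(m)$; for $m=0$ the null measure moreover gives $\bar J(0)\leq\theta_0$, hence $\bar J(0)\leq\min(J(0),\theta_0)$. Together with the lower bound this is the claim.

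\textbf{Expected obstacle.} The delicate ingredient is the Cram\'er--Sanov identity, precisely the bound $\inf_\pi H(\pi|\psi)\leq\Lambda^*(a,b)$: the tilted law $\pi\propto\e^{x^*\tau+y^*W}\psi$ does the job when $(a,b)$ lies in the interior of the effective domain of $\Lambda^*$ (the supremum defining $\Lambda^*(a,b)$ being then attained with the matching gradient), whereas the boundary / non-steep cases need an approximation argument. A secondary (but necessary) point is the bookkeeping of the degenerate case $m=0$: the null measure — mass escaping to infinity at cost $\theta_0$ — is the decisive competitor exactly when $J(0)>\theta_0$, e.g. when $W$ is a non-zero constant, for which $\Lambda^*(\cdot,0)\equiv+\infty$ and hence $J(0)=+\infty$ while $\bar J(0)=\theta_0$.
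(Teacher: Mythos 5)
Your dictionary $\beta=\nu(1/u)$, $\pi=\bar\nu$, $\nu=\beta\,u\,\pi$ is exactly the reparametrization the paper performs (after an intermediate normalization $\nu_1=\nu/\nu(\mathcal X)$; one checks that your $\beta$ coincides with the paper's $\alpha/\gamma$), so the overall strategy is the same, and the lower bound is fine: the Donsker--Varadhan inequality applied to $g=x\tau+yW$ together with $(1-t)(\theta_0-x)\geq 0$, the observation that $x>\theta_0$ is useless because $W$ bounded forces $\mathbb E(\e^{x\tau+yW})=+\infty$, and the bookkeeping of the null measure for $m=0$ — all correct.

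The genuine gap is in the upper bound, and you flag it yourself: you need $p(a,b):=\inf\{H(\pi|\psi):\pi(\tau)=a,\ \pi(W)=b\}\leq\Lambda^*(a,b)$, and ``exponential tilting'' only produces this when $(a,b)$ is exposed (interior, steep case); your parenthetical ``the boundary / non-steep cases need an approximation argument'' is not an argument. This is precisely the one inequality that does \emph{not} follow from Donsker--Varadhan (the opposite inequality $p\geq\Lambda^*$ does). The paper disposes of it cleanly in one line by computing the Legendre conjugate $p^*(x,y)=\sup_{\pi}\{\pi(x\tau+yW)-H(\pi|\psi)\}=\log\mathbb E(\e^{x\tau+yW})=\Lambda(x,y)$ (which is the easy Donsker--Varadhan direction again), and then invoking $p=p^{**}=\Lambda^*$ from the convexity and lower semicontinuity of $p$. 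If you want to keep your two-sided presentation, replace the tilting hand-wave by this biconjugation step; otherwise the remaining steps — taking $t=1$ so that $I(\nu)=\beta H(\pi|\psi)$, then infimizing in $\beta$, and adding the null measure for $m=0$ — match the paper and are correct.

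One more cosmetic remark: the ``expected obstacle'' paragraph conflates the two directions of the Cram\'er--Sanov identity. The inequality you actually need for the upper bound, $p\leq\Lambda^*$, is the one that requires convexity/lsc (or Csisz\'ar's generalized $I$-projection), whereas the tilting heuristic is most naturally stated for the equality case. Once $p=\Lambda^*$ is in hand, everything you wrote closes up.
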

\begin{proof}
The proof is inspired by the proof of Lemma 5.1 in \cite{lefevere_large_2011}.

First remark that if $\nu \in {\bar{\mathcal M}}^1(\mathcal X)$, introducing the normalized $\nu_1=\nu/\nu(\mathcal X)$ (except if $\nu=0$), one has on the one hand $\bar{\nu_1}=\bar \nu$ and on the other hand 
$$I(\nu)=\nu(\mathcal X) \, \nu_1(1/u) \, H(\bar{\nu_1}|\psi) + (1-\nu(\mathcal X)) \, \theta_0 \, ,$$ provided $\nu(1/u)<+\infty$. 

Since for a non null $\nu$, $\nu(\mathcal X)$ can be any $\alpha \in ]0,1]$, we deduce that, defining 
\begin{equation*}
\begin{aligned}
\bar J_1(m) = \inf \left\{\alpha \right. &\, \nu_1(1/u) \, H(\bar{\nu}_1|\psi) + (1-\alpha) \theta_0 \, ; 
\\ 
\,& \left.\alpha \in ]0,1],\nu_1 \in \mathcal M^1(\mathcal X),\nu_1(1/u)<+\infty ,\nu_1(\varphi)=\frac{m}{\alpha} \, \right\} \, ,
\end{aligned}
\end{equation*}
one has $$\bar J(m)=\bar J_1(m) \textrm{ for $m\neq 0$} \quad ; \quad \bar J(0)= \min(\bar J_1(0),\theta_0) \, ,$$ since for $m=0$ one has to also consider the null measure. 

Since $\bar \nu(w)=  \nu(\varphi)/\nu(1/u)$ and $\nu(1/u) = 1/\bar \nu(u)$, it is elementary to see that 
\begin{equation}
\label{eqbarJbis}
\bar J_1(m) = \inf_{\alpha \in ]0,1]\,,\gamma >0\,,\nu' \in \mathcal M^1(\mathcal X)} \{(\alpha/\gamma) \, H(\nu'|\psi) + (1-\alpha) \theta_0; \nu'(u)=\gamma,\nu'(w) = \gamma \, m/\alpha \} \, ,
\end{equation}
the correspondence being $\nu'=\bar \nu_1$ i.e $\nu_1 = (1/\nu'(u)) \, u \, \nu'$.

Now we can mimic what is done in \cite{lefevere_large_2011}. 
 
Let $p(a,b) = \inf \{ H(\nu' |\psi)\,; \nu' \in \mathcal M^1(\mathcal X),\nu'(u) = a, \nu'(w) = b  \}$. We have 
\begin{eqnarray*}	
p^*(x,y) &=& \sup_{a,b\in \mathbb R^2}(ax+by-p(a,b))\\
&=&\sup_{a,b \in \mathbb R^2, \nu' \in \mathcal M^1(\mathcal X)}\{ax+by-H(\nu' |\psi); \nu'(u)=a, \nu'(w)=b\}\\
&=& \sup_{\nu'\in \mathcal M^1(\mathcal X)}\{\nu'(xu+yw)-H(\nu' |\psi)\} = \log \psi(e^{x\tau+yW})\\& = &\Lambda(x,y)
\end{eqnarray*}
thanks to the variational definition of the relative entropy. Since $p$ is lower semi continuous and convex we have $p=(p^*)^*=\Lambda^*$.

We thus deduce that
$$\bar{J_1}(m) = \inf \left\{ \frac{\alpha}{\gamma} \Lambda^*\left(\gamma, \frac{m \gamma}{\alpha} \right) + (1-\alpha) \theta_0 \,;  \alpha \in ]0,1], \gamma >0   \right\} \, .$$	But
	\begin{align*}
	\frac{\alpha}{\gamma} \Lambda^*\left(\gamma, \frac{m \gamma}{\alpha} \right) 
	&=  \beta \Lambda^*\left( \frac{\alpha}{\beta}, \frac{m}{\beta} \right)\text{ where } \beta = \frac{\alpha}{\gamma}.
	\end{align*}
	Thus
	\begin{align*}
	\bar{J_1}(m)&= \inf \left\{ \beta \Lambda^*\left( \frac{\alpha}{\beta}, \frac{m}{\beta} \right) + (1-\alpha) \theta_0\, ;\alpha \in ]0,1], \beta >0   \right\}. 
	\end{align*}
	
We will show that, for any $\beta>0$
	\begin{align*}
	\inf_{\alpha \in ]0,1]} \left\{\beta \Lambda^*\left( \frac{\alpha}{\beta}, \frac{m}{\beta} \right) + (1-\alpha) \theta_0 \right\} = \beta \Lambda^*\left(\frac{1}{\beta}, \frac{m}{\beta}\right).
	\end{align*}
Taking $\alpha = 1$, we see that the left hand side is less than or equal to the right hand side. To show the converse inequality, let us pick $\alpha \in ]0,1]$: 
	\begin{align*}
	\beta \Lambda^*\left(\frac{\alpha}{\beta}, \frac{m}{\beta}\right) + (1-\alpha) \theta_0 &= \sup_{x,y\in\mathbb R^2} \{ \alpha x + (1-\alpha) \theta_0  + m y - \beta \Lambda(x,y) \}\\
	&\geq \sup_{x,y\in\mathbb R^2} \{ x \wedge \theta_0 + my - \beta \Lambda(x,y) \}.
	\end{align*}
	Since $W$ is bounded, $e^{yW} \geq C(y) >0$ for all $y$, so that we have for all $x > \theta_0$  and all $y$, 
$$ \psi(\e^{x\tau +y W}) \geq C(y) \, \psi( \e^{x \tau}) = + \infty \, .$$ This shows  that $\Lambda(x,y) = +\infty$, for all $x > \theta_0$ and for all $y$. Hence, the supremum on $x$ can be restricted to the supremum on $\{x \leq \theta_0\}$: 
	\begin{align*}
	\beta \Lambda^*\left(\frac{\alpha}{\beta}, \frac{m}{\beta}\right) + (1-\alpha) \theta_0 &\geq \sup_{x,y\in\mathbb R^2} \{ x \wedge \theta_0 + my - \beta \Lambda(x,y) \}\\
	&= \sup_{x \leq \theta_0,y\in \mathbb R} \{ x + my - \beta \Lambda(x,y) \}\\
	&= \beta \Lambda^*\left(\frac{1}{\beta}, \frac{m}{\beta} \right)
	\end{align*}
and the desired inequality is proved.
\end{proof}
\medskip

\begin{rmq}\label{remlsc}
Let us remark on a simple example that the rate function $J$ defined in \eqref{eqrate} is not lower semi continuous.
If $W=1$, one has $Z_t=M_t$ and one easily sees that (recall \eqref{eqcramer2}) $\sup_{x,y\in\mathbb R^2}\Lambda(m,\beta,x,y)=+\infty$ except for $\beta=m$ yielding $J(m)=\sup_x \{x-m\log \mathbb E\left(e^{x\tau}\right)\}$ as expected. Notice that $J(0)=+\infty$ since $\beta>0$. In particular if $\tau$ has an exponential distribution with parameter $1$, $\theta_0=1$, $Z_t$ is the standard Poisson process and $J(m)=1-m+m\log m$ for $m>0$ while $J(m)=+\infty$ if $m\leq 0$. Accordingly $J$ is not lower semi continuous at $m=0$, and $\bar J$ is precisely the lower semi continuous envelope of $J$.

We did not check correctly this point in the previous version of the paper and the same minor mistake is made in Lemma 5.1 of \cite{lefevere_large_2011}. \hfill $\diamondsuit$
\end{rmq}

One can ask about whether the infimum defining $J_1$ is achieved or not, hence is a minimum. This question is briefly studied in Lemma 5.1 of \cite{lefevere_large_2011}, where the argument p.22, showing that $\pi_n$ therein is tight, sounds strange. Let us give a complete proof.

\begin{prop}
Under the assumptions of Proposition \ref{propJegal}, for $m\neq 0$, the infimum in \eqref{eqbarJ1} is a minimum, provided it is finite.
\end{prop}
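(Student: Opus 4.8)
The plan is to run the direct method of the calculus of variations. Fix $m\neq 0$ with $\bar J(m)<+\infty$ and pick a minimizing sequence $(\nu_n)_n$ in $\bar{\mathcal M}^1(\mathcal X)$ with $\nu_n(\varphi)=m$ and $I(\nu_n)\to \bar J(m)=:L$. Since $\bar{\mathcal M}^1(\mathcal X)$ is compact for the vague topology, one extracts a subsequence converging vaguely to some $\nu\in\bar{\mathcal M}^1(\mathcal X)$. Because $I$ is a good rate function (Theorem \ref{thmmz}), it is vaguely lower semi-continuous, so $I(\nu)\le\liminf_n I(\nu_n)=L$. Thus everything reduces to proving that the limit is still admissible, namely $\nu(\varphi)=m$: once this holds, $\nu$ is a competitor in \eqref{eqbarJ1}, hence $I(\nu)\ge \bar J(m)=L$, and therefore $I(\nu)=L$, i.e. the infimum is attained at $\nu$.

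Discarding finitely many terms, assume $I(\nu_n)\le L+1$ for all $n$. Then $0<\nu_n(1/u)<+\infty$, $\bar\nu_n\ll\psi$ (so $\nu_n$ is carried by $\{|w|\le K\}$), and $\nu_n$ is not the null measure, since $\nu_n(\varphi)=m\neq 0$. The next step is a pair of a priori bounds. From $|m|=|\nu_n(w/u)|\le K\,\nu_n(1/u)$ we get $\beta_n:=\nu_n(1/u)\ge |m|/K$; inserting this into $I(\nu_n)=\beta_n\,H(\bar\nu_n|\psi)+(1-\nu_n(\mathcal X))\theta_0\le L+1$ (recall \eqref{eqI1}) and using $\theta_0\ge0$ gives $H(\bar\nu_n|\psi)\le C':=(L+1)K/|m|$. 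Since $\psi(\tau=0)=0$, dominated convergence gives $\psi(e^{-cu})\to0$ as $c\to+\infty$, so we may fix $c_0$ with $-\log\psi(e^{-c_0u})\ge C'+1$; the Gibbs variational inequality $-c_0\,\bar\nu_n(u)=\bar\nu_n(-c_0u)\le \log\psi(e^{-c_0u})+H(\bar\nu_n|\psi)$ then yields $\bar\nu_n(u)\ge 1/c_0=:\delta_0>0$. As $\nu_n(\mathcal X)=\nu_n(1/u)\,\bar\nu_n(u)=\beta_n\,\bar\nu_n(u)\le1$ by \eqref{eqpondere}, we also get $\beta_n\le1/\delta_0$.

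The heart of the matter is to prevent the ``$\varphi$-mass'' of $\nu_n$ from escaping to $u\to0$ or $u\to+\infty$. Using $\nu_n=\beta_n\,u\,\bar\nu_n$ (from \eqref{eqpondere}) and the elementary bound $\mu(A)\le(H(\mu|\psi)+\log2)/(-\log\psi(A))$ applied to $\mu=\bar\nu_n$, we get for $0<\varepsilon<R$
\[
\nu_n\big(|\varphi|\,\mathbf 1_{u<\varepsilon}\big)\le K\,\nu_n\big(u^{-1}\mathbf 1_{u<\varepsilon}\big)=K\beta_n\,\bar\nu_n(\{u<\varepsilon\})\le \frac{K}{\delta_0}\cdot\frac{C'+\log2}{-\log\psi(\{u<\varepsilon\})}=:\rho_1(\varepsilon),
\]
together with $\nu_n(|\varphi|\,\mathbf 1_{u>R})\le (K/R)\,\nu_n(\mathcal X)=:\rho_2(R)$; both bounds are uniform in $n$, and $\rho_1(\varepsilon)+\rho_2(R)\to0$ as $\varepsilon\to0$, $R\to+\infty$ (indeed $-\log\psi(\{u<\varepsilon\})\to+\infty$ because $\psi(\tau=0)=0$). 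Now pick a continuous compactly supported cutoff $\varphi_{\varepsilon,R}(u,w)=\varphi(u,w)\,\chi_{\varepsilon,R}(u)\,\eta_K(w)$, where $\mathbf 1_{2\varepsilon\le u\le R/2}\le\chi_{\varepsilon,R}\le\mathbf 1_{\varepsilon\le u\le R}$ and $\eta_K$ is as in Corollary \ref{corcontrac}. Since $\nu_n$ is carried by $\{|w|\le K\}$, one has $|m-\nu_n(\varphi_{\varepsilon,R})|=|\nu_n(\varphi)-\nu_n(\varphi_{\varepsilon,R})|\le \rho_1(\varepsilon)+\rho_2(R)$; letting $n\to\infty$, vague convergence gives $|m-\nu(\varphi_{\varepsilon,R})|\le\rho_1(\varepsilon)+\rho_2(R)$; finally, letting $\varepsilon\to0$ and $R\to+\infty$, dominated convergence (valid since $|\varphi|\le K/u$, $\nu(1/u)\le\liminf_n\beta_n\le1/\delta_0<+\infty$, and $\nu$ is carried by $\{|w|\le K\}$ as a vague limit of such measures) gives $\nu(\varphi_{\varepsilon,R})\to\nu(\varphi)$. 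Hence $\nu(\varphi)=m$, which closes the argument along the lines of the first paragraph.

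The main obstacle is exactly this last point: $\nu\mapsto\nu(\varphi)$ is not vaguely continuous, since $\varphi=w/u$ blows up near $u=0$, so one must exploit the entropy bound $H(\bar\nu_n|\psi)\le C'$ — obtained in the second paragraph from the a priori control of $\beta_n$ — to rule out concentration of $\bar\nu_n$ near $\{u=0\}$ along the minimizing sequence; by contrast the ends $u=+\infty$ and $|w|=+\infty$ are harmless because $\nu_n$ has total mass $\le1$ and is carried by $\{|w|\le K\}$. This provides the clean substitute for the tightness argument of Lemma 5.1 in \cite{lefevere_large_2011}.
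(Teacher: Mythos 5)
Your proof is correct, and it takes a genuinely different route from the paper's. You work directly in $\bar{\mathcal M}^1(\mathcal X)$: vague compactness yields a subsequential limit $\nu$ of the minimizing sequence, the lower semicontinuity of the good rate function $I$ (Theorem \ref{thmmz}) gives $I(\nu)\le\liminf_n I(\nu_n)$ for free, and all the work is concentrated on showing that the constraint $\nu(\varphi)=m$ survives the limit --- which you do via the uniform entropy bound $H(\bar\nu_n|\psi)\le C'$, the two-sided control $|m|/K\le\nu_n(1/u)\le 1/\delta_0$, and the estimate $\bar\nu_n(u<\varepsilon)\le(C'+\log 2)/(-\log\psi(u<\varepsilon))$ preventing $\varphi$-mass from escaping to $u=0$. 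The paper instead reparametrizes through \eqref{eqbarJbis}, extracts a weakly convergent subsequence of the probability measures $\nu'_n$ (tight thanks to the same entropy bound), and must then control the possible loss of mass of $\nu_n=(1/\gamma_n)\,u\,\nu'_n$ at $u=+\infty$ by invoking Lemmas 2.2 and 2.3 of \cite{lefevere_large_2011}; that step is precisely the lower-semicontinuity statement you import wholesale from Theorem \ref{thmmz}. Your version is arguably cleaner because it reuses machinery already established earlier in the paper, while the paper's version is more self-contained at the level of this proposition and makes explicit how the mass defect is charged to the $\theta_0$ term. Two cosmetic remarks: with your cutoff $\mathbf 1_{2\varepsilon\le u\le R/2}\le\chi_{\varepsilon,R}\le\mathbf 1_{\varepsilon\le u\le R}$ the error is $\rho_1(2\varepsilon)+\rho_2(R/2)$ rather than $\rho_1(\varepsilon)+\rho_2(R)$, and the Gibbs inequality applied to the function $-c_0u$, which is unbounded below, requires the standard extension to functions bounded above (truncate and pass to the limit); neither point affects the argument.
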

\begin{proof} We use the expression \eqref{eqbarJbis} in order to prove the proposition. Assume that $m \neq 0$.
If $\bar J_1(m)<+\infty$ consider a minimizing sequence $(\gamma_n,\alpha_n,\nu'_n)_{n\ge0}$.
First, $H(\nu'_n|\psi)<+\infty$ (at least for large $n$'s), so that $\nu'_n$ is absolutely continuous with respect to $\psi$, and so $\nu'_n(|w|\leq K)=1$. It follows that $\gamma_n /\alpha_n \leq K/|m|$ hence $\gamma_n \leq K/|m|$. 

Since $\alpha_n\in]0,1]$ and $\gamma_n$ is bounded, one can find a subsequence still denoted $(\alpha_n,\gamma_n)_{n\ge0}$ converging to $(\alpha,\gamma)\in[0,1]\times [-K/|m|, K/|m|]$.  
In addition, for $n$ large enough, 
$$(\alpha_n/\gamma_n) H(\nu'_n|\psi) \leq \bar J_1(m) +1:=C$$ 
so that 
$$H(\nu'_n|\psi) \leq C \, (\gamma_n/\alpha_n) \leq C \, (K/|m|) \, .$$ 
Since the entropy is bounded, the sequence $(\nu'_n)_{n\ge0}$ is tight and one can thus also find a subsequence weakly converging to $\nu'_\infty$ which satisfies $H(\nu'_\infty|\psi) \leq \liminf_n H(\nu'_n|\psi) < +\infty$ thanks to the lower semi continuity of the entropy w.r.t. the first variable. 

Recall that $\gamma_n=\nu'_n(u)$. For all $M>0$, we have that $\gamma_n \geq \nu'_n(u\wedge M)$ and taking the limit in $n$, we deduce that $\nu'_\infty(u\wedge M)=\lim_n \, \nu'_n(u\wedge M) \leq \gamma$ and finally using the monotone convergence $\nu'_\infty(u)=\gamma' \leq \gamma$. 
We deduce in particular  that $\gamma >0$ since $\nu'_\infty(u=0)=0$ because the measure $\nu'_\infty$ is absolutely continuous w.r.t. $\psi$ and $\psi(u=0)=0$ by assumption.\\ 
Moreover, since $K \geq \gamma_n |m|/\alpha_n$ and $\gamma_n \to_{n\to\infty} \gamma >0$, we also have that  $\alpha=\lim_{n\to \infty} \alpha_n >0$. 
In addition, from the absolute continuity of $\nu'_n$ and $\nu'_\infty$ w.r.t. $\psi$, we deduce that $\nu'_\infty(|w|\leq K)=1$ and 
$$m\gamma/\alpha = \lim_n \nu'_n(w)= \lim_n \nu'_n(w \, \mathbf 1_{|w|\leq K})=\nu'_\infty(w \, \mathbf 1_{|w|\leq K}) = \nu'_\infty(w) \, .$$ Introduce $\nu_n = (1/\gamma_n) \, u \, \nu'_n$. $\nu_n$ is a sequence of probability measures that vaguely converges to $\nu_\infty$ satisfying $\nu_\infty(\mathcal X)=\gamma'/\gamma$, $\nu_\infty(1/u)=1/\gamma$ and $\nu_\infty(\varphi)=m/\alpha$. Of course $\bar \nu_\infty=\nu'_\infty$.

According to Lemma 2.3 and Lemma 2.2 in \cite{lefevere_large_2011} (based on the variational formula for the entropy) $$\liminf_n \, \frac 1\gamma_n \, H(\nu'_n|\psi) \geq (\gamma'/\gamma)  \frac{1}{\gamma} H(\nu'_\infty|\psi) + (1-(\gamma'/\gamma)) \theta_0 \, .$$ Finally define $\mu_\infty= \alpha \nu_\infty$ so that $\mu_\infty(\mathcal X) = \alpha (\gamma'/\gamma) \leq 1$ and $\mu_\infty \in {\bar{\mathcal M}}^1(\mathcal X)$. From what precedes we deduce 
\begin{eqnarray*}
\bar J_1(m) &=& \liminf_n \left((\alpha_n/\gamma_n) H(\nu'_n|\psi) + (1-\alpha_n)\theta_0\right)\\ &\geq& \mu_\infty(1/u) \, H(\bar \mu_\infty|\psi) + ((1-\alpha)+\alpha(1-(\gamma'/\gamma)) \theta_0 \\ &=& \mu_\infty(1/u) \, H(\bar \mu_\infty|\psi)+(1-\mu_\infty(\mathcal X))\theta_0
\end{eqnarray*}
and in addition $\mu_\infty(\varphi)=m$. Hence the infimum for $\bar J(m)$ is achieved at $\mu_\infty$. 
\end{proof}
\medskip

\section{Large deviations for the cumulative process when $W$ is bounded.}\label{secldcumul}

In this section, we shall deduce a LDP for $(Z_t/t)_{t\ge0}$ starting with \eqref{eqempircontrac}.  We still assume that $W$ is a bounded random variable, therefore it consists in relaxing the assumption on $\tau$ in corollary \ref{corcontrac}.

To this end, for $\varepsilon>0$, introduce $\tau^\varepsilon=\tau+\varepsilon$ and $\psi^\epsilon$ the distribution of $(\tau^\varepsilon,W)$. We then define $I^\varepsilon$ as in \eqref{eqI1}, replacing $\psi$ by $\psi^\varepsilon$, and ${\bar J}^\varepsilon$ as in \eqref{eqbarJ1} replacing $I$ by $I^\varepsilon$.

\begin{thm}\label{thmwbound}
Assume that there  exists a positive constant $K$ such that $\psi(|w|\leq K)=1$. Then, $(Z_t/t)_{t\ge0}$ satisfies a full LDP with the good convex rate function $\bar J$.
\end{thm}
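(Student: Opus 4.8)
The plan is to derive the full LDP for $(Z_t/t)_{t\ge0}$ from Corollary \ref{corcontrac} (which already gives a full LDP under the extra boundedness assumption $\tau\ge\varepsilon$) by an exponential approximation argument in $\varepsilon$, together with Theorem \ref{thmexpgood} and Lemma \ref{lem_exptightLDP}. First I would fix $\varepsilon>0$ and work with the perturbed pair $(\tau^\varepsilon,W)$ with law $\psi^\varepsilon$. Since $\tau^\varepsilon\ge\varepsilon$ and $W$ is bounded, Corollary \ref{corcontrac} applies to the corresponding empirical-measure functional $\mu_t^\varepsilon(\varphi)$, giving a full LDP with good convex rate function $\bar J^\varepsilon$; by Proposition \ref{propJegal} applied to $\psi^\varepsilon$ this rate function is explicit in terms of $\Lambda^{*,\varepsilon}$, the Cramér transform of $(\tau^\varepsilon,W)=(\tau+\varepsilon,W)$. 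Because $\mathbb E(e^{x\tau^\varepsilon+yW})=e^{x\varepsilon}\mathbb E(e^{x\tau+yW})$, one has $\Lambda^\varepsilon(x,y)=x\varepsilon+\Lambda(x,y)$, so $\bar J^\varepsilon$ and its limit as $\varepsilon\to0$ can be computed and compared to $\bar J$ directly.

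Next I would set up the exponential approximation. Let $Z_t^\varepsilon=\sum_{i=1}^{M_t^\varepsilon}W_i$ be the cumulative process built from $(\tau^\varepsilon_i,W_i)$, where $M_t^\varepsilon$ is the associated renewal counting process; equivalently one couples everything on the same space by adding $\varepsilon$ to each waiting time. The two families $(Z_t/t)$ and $(Z_t^\varepsilon/t)$ should be shown to form an exponentially good approximation (in the sense of Definition \ref{defexpgood}, with $n$ replaced by $1/\varepsilon$): the point is that $|M_t-M_t^\varepsilon|$ is controlled since the extra drift shifts $S_n$ by $n\varepsilon$, so $M_t^\varepsilon\le M_t$ and $M_t-M_t^\varepsilon$ is of order $\varepsilon M_t$; combined with the boundedness $|W_i|\le K$ and a standard exponential bound on $M_t/t$ (which follows from $\theta_0>0$, or can be obtained directly here since we only need $M_t/t$ to be exponentially bounded, e.g. via $\mathbb P(M_t\ge \lambda t)\le \mathbb P(S_{\lceil\lambda t\rceil}\le t)$ and a Chernoff estimate using that $\tau$ is not a.s. $0$), one gets $\frac1t\log\mathbb P(|Z_t-Z_t^\varepsilon|/t>\delta)\to-\infty$ as $\varepsilon\to0$, uniformly enough in $t$. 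Actually the cleanest route is probably to carry out the exponential approximation one level up, at the level of $\mu_t^\varepsilon(\varphi)$ versus $\mu_t(\varphi)$ using \eqref{eqempircontrac}, since the remainder term $\frac{t-S_{M_t}}{t}\frac{W_{M_t+1}}{\tau_{M_t+1}}$ needs to be handled anyway and the same renewal estimates apply.

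Then Theorem \ref{thmexpgood}(2) (applicable since $\mathcal X=\mathbb R$ is locally compact) yields a weak LDP for $(Z_t/t)$ with rate function $\tilde J(m)=\sup_{\delta>0}\liminf_{\varepsilon\to0}\inf_{|m'-m|<\delta}\bar J^\varepsilon(m')$, and the remaining analytic task is to identify $\tilde J$ with $\bar J$. Here I would use that $\bar J^\varepsilon(m)=\inf_{\beta>0}\beta\,\Lambda^{*,\varepsilon}(1/\beta,m/\beta)$ for $m\ne0$ (and the $\theta_0^\varepsilon$-correction at $m=0$, noting $\theta_0^\varepsilon=\theta_0$ since adding a constant does not change the finiteness threshold of $\mathbb E(e^{\theta\tau})$... more precisely $\mathbb E(e^{\theta\tau^\varepsilon})<\infty\iff\mathbb E(e^{\theta\tau})<\infty$), and that $\Lambda^{*,\varepsilon}\to\Lambda^*$ pointwise and monotonically as $\varepsilon\downarrow0$ by the formula $\Lambda^\varepsilon(x,y)=x\varepsilon+\Lambda(x,y)$ restricted to $x\le\theta_0$ (for $x>0$ the perturbation only helps convergence; for $x\le0$ it is a small uniform shift). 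A convexity/lower-semicontinuity argument then gives $\tilde J=\bar J$. Finally, exponential tightness of $(Z_t/t)$ is immediate from $|Z_t|\le K M_t$ and the exponential bound on $M_t/t$, so Lemma \ref{lem_exptightLDP} upgrades the weak LDP to a full LDP and confirms $\bar J$ is a good rate function; convexity of $\bar J$ is inherited from convexity of each $\bar J^\varepsilon$ (Corollary \ref{corcontrac}) passing to the limit.

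The main obstacle I anticipate is the interchange of limits needed to identify $\tilde J$ with $\bar J$ at and near $m=0$: the $\liminf_{\varepsilon\to0}\inf_{|m'|<\delta}$ must be shown to reproduce exactly the lower-semicontinuous envelope behaviour encoded in $\bar J(0)=\min(J(0),\theta_0)$, and one must check that the $\varepsilon$-perturbation does not create spurious mass or lose the $\theta_0$ term coming from the null measure in $I^\varepsilon$. Controlling $\bar J^\varepsilon$ uniformly for $m'$ in a neighbourhood of a given $m$ — rather than just pointwise in $\varepsilon$ — is the delicate part, and is where I would spend most of the effort, likely by exploiting convexity of $m\mapsto\bar J^\varepsilon(m)$ to turn pointwise convergence into locally uniform convergence away from the boundary of the domain, and handling the boundary separately.
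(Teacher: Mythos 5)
Your plan follows the same overall strategy as the paper: perturb the waiting times to $\tau^\varepsilon=\tau+\varepsilon$, invoke Corollary \ref{corcontrac} for the truncated model, show that the perturbed processes form an exponentially good approximation (the paper does this exactly as you suggest, via $M_t^\varepsilon\le M_t$, $S_n^\varepsilon=S_n+n\varepsilon$, the bound $|W_i|\le K$, and Chernoff estimates on $\mathbb P(S_{\lfloor t\delta\rfloor}\le At\varepsilon)$ and $\mathbb P(S_{\lfloor At\rfloor}\le t)$), apply Theorem \ref{thmexpgood}(2) to get a weak LDP with rate $\widetilde J$, and then identify $\widetilde J$ with $\bar J$. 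The one place where you genuinely diverge is the upgrade from weak to full LDP: you propose exponential tightness of $Z_t/t$ (immediate from $|Z_t|\le KM_t$ and $\mathbb P(M_t\ge\lambda t)\le e^{t}\,\mathbb E(e^{-\tau})^{\lfloor\lambda t\rfloor}$) together with Lemma \ref{lem_exptightLDP}, whereas the paper verifies condition (3) of Theorem \ref{thmexpgood} for closed sets, which requires a uniform bound on the level sets of the $\bar J^\varepsilon$ and a compactness argument. In the bounded-$W$ setting your route is arguably cleaner and buys the goodness of the rate function for free; the paper's route has the advantage of being the template reused later in the unbounded case.

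The thin spot is the identification $\widetilde J=\bar J$, and in particular the inequality $\widetilde J\le\bar J$. Your formula $\Lambda^\varepsilon(x,y)=x\varepsilon+\Lambda(x,y)$ is correct and gives $\Lambda^{*,\varepsilon}(a,b)=\Lambda^*(a-\varepsilon,b)$, but this convergence is not monotone in $\varepsilon$ (convexity of $\Lambda^*(\cdot,b)$ only gives monotonicity on either side of its minimum), and pointwise convergence of $\Lambda^{*,\varepsilon}$ can fail, or fail to pass through the $\inf_\beta$, at the boundary of the effective domain of $\Lambda^*$ — precisely where the optimal $\beta$ may sit. This is why the paper proves $\widetilde J\ge\bar J$ with the Legendre-transform identity (restricting the supremum to $x\le\theta_0$ and extracting a convergent subsequence of near-optimal $\beta_n$), but proves $\widetilde J\le\bar J$ at the level of measures: it takes a minimizing sequence $\mu_k$ for $I$ with $\mu_k(\varphi)=m$, pushes it forward by $(u,w)\mapsto(u+\varepsilon,w)$, and checks by dominated convergence that $H(\bar\mu_k^\varepsilon|\psi^\varepsilon)\to H(\bar\mu_k|\psi)$ and $\mu_k^\varepsilon(\varphi)\to m$. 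If you want to stay purely on the Cram\'er side you would need to supply the upper-semicontinuity of $\varepsilon\mapsto\inf_\beta\beta\Lambda^*(1/\beta-\varepsilon,m/\beta)$ at $\varepsilon=0$, which is exactly the delicate interchange of limits you flag; the measure-level argument is the way the paper sidesteps it, and I would recommend adopting it.
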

\begin{proof}
The proof will be done in several steps.

\textit{Step1.} \quad We shall first prove the 
\begin{lemme}\label{lemwbound}
Assume that there  exists a positive constant $K$ such that $\psi-$almost surely, $|W|\leq K$. Then, $(Z_t/t)_{t\ge0}$ satisfies a weak LDP with the convex rate function 
\begin{equation}\label{eqtildeJ}
\widetilde J(m)= \sup_{\delta >0} \, \liminf_{\varepsilon \to 0} \, \inf_{|z-m|<\delta} \, {\bar J}^\varepsilon(z) \, .
\end{equation}
\end{lemme}
\begin{proof}[Proof of the lemma] \quad Following the same lines as in \eqref{eqempircontrac} 
$$\mu_t^{\varepsilon}(\varphi) = \frac 1t \, \sum_{i=1}^{M^\varepsilon_t} W_i \, + \, \frac{ (t-S^\varepsilon_{M^\varepsilon_t}) W_{M^\varepsilon_t+1}}{t \, \tau^\varepsilon_{M^\varepsilon_t+1}} \, .$$ 
Since $\tau^\varepsilon \geq \tau$, we deduce that $M_t^\varepsilon \leq M_t$. Accordingly
\begin{eqnarray*}
\left|\mu_t^
{\varepsilon}(\varphi) - \frac 1t \, \sum_{i=1}^{M_t} W_i \right| &\leq& \frac 1t \, \left|\sum_{i=M_t^\varepsilon +1}^{M_t} W_i\right| \, + \, \left|\frac{ (t-S^\varepsilon_{M^\varepsilon_t}) W_{M^\varepsilon_t+1}}{t \, \tau^\varepsilon_{M^\varepsilon_t+1}}\right| \\ &\leq& \frac Kt \, \left((M_t-M_t^\varepsilon) +1\right) \, .
\end{eqnarray*}
Using Theorem \ref{thmexpgood}, it is then sufficient to prove that $(M_t^\varepsilon/t)_\varepsilon$ is an exponentially good approximation of $M_t/t$ , i.e. that 
$$\lim_{\varepsilon \to 0} \, \limsup_{t \to \infty} \, \frac 1t \, \log \mathbb P(|M_t-M_t^\varepsilon| >\delta \, t) \, = \, -\infty \, .$$

The proof is similar to the one of \cite{lefevere_large_2011} Lemma 5.4 where a different approximation is used. Denote as usual by $\lfloor x \rfloor$ the integer part of $x\in\mathbb{R}$.
Recall that $M_t^\varepsilon \leq M_t$ and $S_n^\varepsilon=S_n+n\varepsilon$. Choose some $\delta >0$ and $A>0$. Then 
\begin{eqnarray*}
\mathbb P(M_t - \, M_t^\varepsilon > t\delta) &\leq& \sum_{n=1}^{\lfloor At\rfloor} \, \mathbb P(M_t - \, M_t^\varepsilon > t\delta \, ; \, M_t=n) \, + \, \mathbb P(M_t > \lfloor At\rfloor)\\ &=& \sum_{n=1}^{\lfloor At\rfloor} \, \mathbb P( M_t^\varepsilon < n - t\delta \, ; \, M_t=n) \, + \, \mathbb P(S_{\lfloor At\rfloor}\leq t) \\ &\leq& \sum_{n=1}^{\lfloor At\rfloor} \, \mathbb P( S^\varepsilon_{\lfloor n-t\delta\rfloor} \geq t ; \, M_t=n) \, + \, \mathbb P(S_{\lfloor At\rfloor}\leq t) \\ &\leq& \sum_{n=1}^{\lfloor At\rfloor} \, \mathbb P( S_{\lfloor n-t\delta\rfloor} \geq t -(n-t\delta)\varepsilon ; \, S_n=t) \, + \, \mathbb P(S_{\lfloor At\rfloor}\leq t)\\ &\leq& \sum_{n=1}^{\lfloor At\rfloor} \, \mathbb P( S_n- S_{\lfloor n-t\delta\rfloor} \leq(n-t\delta)\varepsilon) \, + \, \mathbb P(S_{\lfloor At\rfloor}\leq t)\\ &\leq& At \, \mathbb P(S_{\lfloor t\delta\rfloor}\leq At\varepsilon) + \mathbb P(S_{\lfloor At\rfloor}\leq t)\,,
\end{eqnarray*}
where we have used that the distribution of $S_j-S_k$ is the one of $S_{j-k}$ for any  positive integers $j\ge k$. 

According to Markov inequality 
$$\mathbb P(S_{\lfloor t\delta\rfloor}\leq At\varepsilon) =\mathbb P(e^{-S_{\lfloor t\delta\rfloor}/\varepsilon}\geq e^{-At})\leq \exp (At + \lfloor t\delta\rfloor \, \log \mathbb E(e^{-\tau/ \varepsilon}) )\, .$$ 
Thus
$$\limsup_{t \to \infty} \, \frac 1t \, \log(At \, \mathbb P(S_{\lfloor t\delta\rfloor}\leq At\varepsilon)) = A +  \delta \, \log \mathbb E(e^{-\tau/ \varepsilon}) \, .$$
Since $\log \mathbb E(e^{-\tau/ \varepsilon})\to_{\varepsilon\to0} -\infty$, 
we have 
$$\lim_{\varepsilon \to 0} \limsup_{t \to \infty} \, \frac 1t \, \log(At \, \mathbb P(S_{\lfloor t\delta\rfloor}\leq At\varepsilon)) = - \infty \, .$$
Similarly
$$\mathbb P(S_{\lfloor At\rfloor}\leq t) \leq \exp (t + \lfloor At\rfloor \, \log \mathbb E(e^{-\tau}))\,, $$ 
so that choosing $A$ large enough, we can make $\frac 1t \, \log \mathbb P(S_{\lfloor At\rfloor}\leq t)$ as small as we want i.e. less than $-B$ for any given $B>0$. It is then enough to let $\varepsilon$ go to $0$  and then $B$ go to infinity to obtain the result.
\end{proof}

In particular we know  from Theorem \ref{thmexpgood} that $\widetilde J$ is lower semi-continuous so that its level sets are closed.
\medskip

\textit{Step 2.} \quad We shall now identify $\widetilde J$ with $\bar{J}$. Recall that for all $m\neq 0$, $\bar{J}(m)=J(m) = \inf_{\beta>0} \sup_{x,y\in \mathbb R^2} \Lambda (m, \beta, x,y)$, where $\Lambda$ is defined in \eqref{eqcramer2}. 
\begin{lemme}\label{lemegal}
Under the assumptions of Lemma \ref{lemwbound}, $\widetilde J \geq \bar J$.
\end{lemme}
\begin{proof}[Proof of the Lemma]
Since $\tau > 0$ almost surely, one can find $x_\tau <0$ such that $\mathbb E(e^{x_\tau \, \tau})=e^{-1}$ so that 
$$\sup_{x,y} \Lambda(m,\beta,x,y) \geq \sup_x \Lambda(m,\beta,x,0) \geq x_\tau + \beta \, . $$ 
In particular if $J(m)<+\infty$ the infimum in $\beta$ has to be taken for $\beta \leq J(m) - x_\tau=\beta_\tau$. 
\medskip

From now we assume that $\widetilde J < +\infty$, indeed if $\widetilde J(m)=+\infty$, the inequality $\bar J(m)\leq \widetilde J(m)$ clearly holds.
The key remark is the following equality 
\begin{equation}\label{eqlambda}
\Lambda^\varepsilon(m,\beta,x,y)=\Lambda(m,\beta,x,y) -x\beta\varepsilon \, .
\end{equation}
If $\theta_0<+\infty$ it immediately follows from \eqref{eqlambda} and the fact that according to the proof of Proposition \ref{propJegal} the supremum in $\bar{J}$ can be restricted to $\{x\le \theta_0\}$ that $$\bar J(m) \leq  \bar J^\varepsilon(m) + \beta_\tau \, \varepsilon \, \theta_0\,,$$
 for the case $m=0$ just remark in addition that $\theta_0 \leq \theta_0(1+\beta_\tau \varepsilon)$. 

One can find a sequence $(m_n,\varepsilon_n)_{n\ge0}$ going to $(m,0)$ such that $\widetilde  J(m) = \liminf_{n \to \infty} \, \bar J^{\varepsilon_n}(m_n)$. Since $\bar J$ is lower semi continuous, 
 $$\bar J(m) \leq \liminf_{n \to \infty} \bar J(m_n) \leq \liminf_{n \to \infty} \, (\bar J^{\varepsilon_n}(m_n)+\theta_0 \beta_\tau \, \varepsilon_n)=\widetilde J(m) \, .$$

If $\theta_0=+\infty$ consider the previous sequence $(m_n,\varepsilon_n)_{n\ge0}$. One can in addition find a sequence $(\beta_n)_{n\ge0}$ and some sequence $(\eta_n)_{n\ge0}$ going to $0$ such that for all $(x,y)$, 
$$\Lambda(m_n,\beta_n,x,y) -x\beta_n\varepsilon_n \leq \widetilde J(m) + \eta_n \, .$$  
Since $\beta_n \in [0,\beta_\tau]$, we may assume that $\beta_n \to \beta$ up to considering a subsequence. $\beta$ has to be positive, otherwise, taking limits as $n\to\infty$ we  would get that for all $(x,y)$ 
$$\Lambda(m,0,x,y)=x+my \leq \widetilde J(m) < +\infty$$which is impossible. 
Hence $\beta>0$ and taking limits again, we obtain $\Lambda(m,\beta,x,y) \leq \widetilde J(m)$ for some $\beta>0$ and all $(x,y)$ so that $\bar J(m) \leq \widetilde J(m)$.
\end{proof}
\medskip

We turn to the converse inequality 
\begin{lemme}\label{lemegal2}
Under the assumptions of Lemma \ref{lemwbound}, $\widetilde J \leq \bar J$.
\end{lemme}
\begin{proof}
It is enough this time to assume that $\bar J(m)$ and thus $J(m)$ is finite. Notice furthermore than if $m=0$ and $\bar J(0)=\theta_0$ there is nothing to prove since $\widetilde J(0) \leq \liminf_{\varepsilon \to 0} \bar J^\varepsilon(0) \leq \theta_0$.
As a consequence if $m=0$ we may assume in addition that $J(0) < \theta_0$.

Recall that $\bar{J}$ is defined in \eqref{eqbarJ1}. Let $\mu_k$ be a minimizing sequence of $\bar J(m)$ in ${\bar{\mathcal M}}^1(\mathcal X)$, i.e. $I(\mu_k) \leq \bar J(m) + \eta_k$ with $\eta_k \to_{k\to\infty} 0$ and $\mu_k(\varphi)=m$. From the definition of $I$, we have in particular $\mu_k(1/u) < +\infty$. Let us introduce $\mu_k^\varepsilon$ the push forward of $\mu_k$ by the application $t_\varepsilon:(u,w) \mapsto (u+\varepsilon,w)$ (i.e. if $(\tau,W)$ is distributed according to $\mu_k$, $\mu_k^\varepsilon$ is the distribution of $(\tau + \varepsilon,W)$). 
Of course $\mu_k^\varepsilon(\mathcal X) \to_{\varepsilon\to0} \mu_k(\mathcal X)$ and $\mu_k^\varepsilon(1/u) \to_{\varepsilon\to0} \mu_k(1/u)$ thanks to Lebesgue's bounded convergence theorem, and finally, since $W$ is bounded for all considered measures, the same theorem shows that $$\mu_k^\varepsilon(\varphi)=m_k^\varepsilon \to m=\mu_k(\varphi) \quad \textrm{as $\varepsilon \to 0$.}$$ 
Since the minimizing measure is not the null measure, we may assume that $\mu_k(\mathcal X) \geq \chi >0$ for all $k$, so that $ H(\bar \mu_k|\psi)<+\infty$.

In addition, we have for any bounded continuous function $f$
\begin{eqnarray*}
\int \, f(u,w) \, \bar \mu_k^\varepsilon(du,dw) &=& \int \, f(u,w) \frac{1}{\mu^\varepsilon_k(1/u)} \, \frac 1u \, \mu^\varepsilon_k(du,dw) \\ &=& \int \, f(u+\varepsilon,w) \; \frac{1}{\mu_k(1/(u+\varepsilon))} \, \frac{1}{u+\varepsilon} \, \mu_k(du,dw) \\&=& \int \, f(u+\varepsilon,w) \; \frac{\mu_k(1/u)}{\mu_k(1/(u+\varepsilon))} \, \frac{u}{u+\varepsilon} \, \bar \mu_k(du,dw)
\end{eqnarray*}
Since $1/(u+\varepsilon) \leq 1/u$ which is $\mu_k$ integrable and $u/u+\varepsilon \leq 1$, it is thus immediately seen, thanks to Lebesgue's convergence theorem, that $\bar \mu_k^\varepsilon \to \bar \mu_k$ (and of course $\psi^\varepsilon \to \psi$) weakly as $\varepsilon \to 0$.  

Since $H(\bar \mu_k|\psi)<+\infty$, $\bar \mu_k$ is absolutely continuous w.r.t. $\psi$ with a density denoted by $g_k$. It follows that $\bar \mu_k^\varepsilon$ is absolutely continuous w.r.t. $\psi^\varepsilon$ with a density given by $$g_k^\varepsilon(u,w) = \frac{\mu_k(1/u)}{\mu_k(1/(u+\varepsilon))} \, \frac{u-\varepsilon}{u} \, g_k(u-\varepsilon,w) \, = \, C^\varepsilon \, \frac{u-\varepsilon}{u} \, g_k(u-\varepsilon,w) \, ,$$ recall that $\psi^\varepsilon(u>\varepsilon)=1$ so that we only need to consider such $u$'s.

We thus have
 $$H(\bar \mu_k^\varepsilon|\psi^\varepsilon) = \int \, g_k^\varepsilon \, \log g_k^\varepsilon \,  \, d\psi^\varepsilon = \int \, \log\left(C^\varepsilon \, \frac{u}{u+\varepsilon} \, g_k(u,w)\right) \;  C^\varepsilon \, \frac{u}{u+\varepsilon} \, g_k(u,w) \,\psi(du,dw) \, .$$ 
 Notice that, for $\varepsilon \leq 1$, $C^\varepsilon \, \frac{u}{u+\varepsilon} \, g_k(u,w) \leq 
C^1 \, g_k(u,w)$ so that 
$$\left|\log\left(C^\varepsilon \, \frac{u}{u+\varepsilon} \, g_k(u,w)\right) \;  C^\varepsilon \, \frac{u}{u+\varepsilon} \, g_k(u,w)\right| \, \leq \, \max\left(e^{-1} \, ; \, \log(C^1 \, g_k(u,w)) \, C^1 \, g_k(u,w)\right)$$ 
which is $\psi$ integrable since $H(\bar \mu_k|\psi)<+\infty$. It follows, using again Lebesgue's theorem, that $\lim_{\varepsilon \to 0} \, H(\bar \mu_k^\varepsilon|\psi^\varepsilon) = H(\bar \mu_k|\psi)$.

For a given $\delta>0$, we thus have $$\liminf_{\varepsilon \to 0} \, \inf_{|z-m|<\delta} \, \bar J^\varepsilon(z) \leq \liminf_{\varepsilon \to 0} \, \bar J^\varepsilon(m_k^\varepsilon) \leq J(m) + \eta_k \, .$$ The upper bound does not depend on $\delta$ and it remains to make $\eta_k \to 0$ to get the result.
\end{proof}
\medskip

\textit{Step 3.} \quad In oder to get the full LDP we need to check condition (3) in Theorem \ref{thmexpgood} i.e. that for all closed set $F$ , $$\inf_{z \in F} \bar J(z) \leq \limsup_{\varepsilon \to 0} \inf_{z \in F} \bar J^{\varepsilon}(z) \, .$$ We may of course assume that the right hand side is finite. For $\theta_0 < +\infty$ it is an immediate consequence of $\bar J(m) \leq \bar J^{\varepsilon}(m) + \beta_\tau \theta_0 \, \varepsilon$. 

If $\theta_0=+\infty$, remark that for $\beta <\beta_{\tau}$ $$\sup_{x,y} \Lambda(m,\beta,x,y) \geq \Lambda(m,\beta,0,1)=m-\beta \log \mathbb E(e^{W}) \geq m - \beta  K \, \geq m - \beta_\tau  K \, ,$$ and similarly  $$\sup_{x,y} \Lambda(m,\beta,x,y) \geq \Lambda(m,\beta,0,-1)=-m-\beta \log \mathbb E(e^{-W}) \geq - m - \beta_\tau  K \, .$$ It follows $J^\varepsilon(m) \geq |m| - \beta_\tau  K$ for all $\varepsilon$ (including $\varepsilon=0$), so that the level sets $\bar J^\varepsilon \leq M$ are all included in the ball $|m| \leq M + \beta_\tau K$.  

For a closed set F, one can thus find a sequence $\varepsilon_n,z_n$ with $\varepsilon \to_{n\to\infty} 0$ such that $\bar J^{\varepsilon_n}(z_n) \leq \inf_{z' \in F} J^{\varepsilon_n}(z') +1/n$ and $z_n \in F \cap \{|m|\leq C\}$ for some $C$ large enough. Taking a subsequence if necessary, we may assume that $z_n \to z \in F$ since $F$ is closed. We have $\bar J^{\varepsilon_n}(z) \geq \bar J^{\varepsilon_n}(z_n) - (1/n)$. We can thus argue as in the proof of Lemma \ref{lemegal} to show that $$\limsup_n \inf_{z' \in F} J^{\varepsilon_n}(z') = \limsup_n \bar J^{\varepsilon_n}(z) \geq \bar J(z) \geq \inf_{z' \in F} \bar J(z') \, .$$
\end{proof}
\medskip

\section{Deviations for the cumulative process in the general case. Proof of Theorem \ref{thmmain}.}\label{secgeneral}

We will now try to relax the boundedness assumption on $W$. We thus introduce $W^n=W\vee (-n) \wedge n$, $\psi^n$ the distribution of $(\tau,W^n)$, $I^n$, $\bar J^n$ and $J^n$ are defined accordingly. It is thus natural to look at 
\begin{equation}\label{eqtildegene}
\widetilde J(m)= \sup_{\delta>0} \, \liminf_{n \to +\infty} \, \inf_{|z-m|<\delta} \bar J^n(z) \, .
\end{equation}
We shall this time first compare $\widetilde J$ and $\bar J$.

\begin{lemme}\label{lemtilde1}
It holds $\bar J \leq \widetilde J$.
\end{lemme}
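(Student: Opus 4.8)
The plan is to mirror the proof of Lemma \ref{lemegal}, with the truncation $W\mapsto W^n$ now playing the role that the shift $\tau\mapsto\tau+\varepsilon$ played there. We may assume $\widetilde J(m)<+\infty$. The case $m=0$ is partly trivial: the null measure gives $\bar J(0)\le\theta_0$ in \eqref{eqbarJ1}, so if $\widetilde J(0)\ge\theta_0$ there is nothing to prove; hence we may also assume that $\widetilde J(m)<\theta_0$ whenever $m=0$.

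From \eqref{eqtildegene} I would extract a diagonal minimizing sequence: for each $k\ge1$ there are $n_k\to\infty$ and $z_k$ with $|z_k-m|<1/k$ and $\bar J^{n_k}(z_k)\le\widetilde J(m)+2/k$. Since $W^{n_k}$ is bounded, Proposition \ref{propJegal} applies and $\bar J^{n_k}(z_k)=J^{n_k}(z_k)=\inf_{\beta>0}\sup_{x,y}\Lambda^{n_k}(z_k,\beta,x,y)$ for $z_k\neq0$ (and the same for $z_k=0$ once $k$ is large, using $\widetilde J(m)<\theta_0$ to rule out the value $\theta_0$). Pick $\beta_k>0$ with $\sup_{x,y}\Lambda^{n_k}(z_k,\beta_k,x,y)\le\bar J^{n_k}(z_k)+1/k$. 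As in Lemma \ref{lemegal}, choosing $x_\tau<0$ with $\mathbb E(e^{x_\tau\tau})=e^{-1}$ (possible because $\psi(\tau=0)=0$) and evaluating at $(x_\tau,0)$ gives $x_\tau+\beta_k\le\widetilde J(m)+3$, so $(\beta_k)$ is bounded; passing to a subsequence, $\beta_k\to\beta\in[0,\beta_\tau]$ with $\beta_\tau:=\widetilde J(m)+3-x_\tau$, and simultaneously $z_k\to m$.

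It then remains to pass to the limit in $\Lambda^{n_k}(z_k,\beta_k,x,y)=x+z_ky-\beta_k\log\mathbb E(e^{x\tau+yW^{n_k}})\le\widetilde J(m)+o(1)$ and to deduce $\Lambda(m,\beta,x,y)\le\widetilde J(m)$ for every $(x,y)$, which by \eqref{eqrate} gives $\bar J(m)=J(m)\le\sup_{x,y}\Lambda(m,\beta,x,y)\le\widetilde J(m)$ (for $m\neq0$; the case $m=0$ follows since $\bar J(0)=\min(J(0),\theta_0)$). For this one needs (i) $\beta>0$ and (ii) $\mathbb E(e^{x\tau+yW^{n_k}})\to\mathbb E(e^{x\tau+yW})$. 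For (i): since $W^{n_k}\to W$ pointwise and $|W^{n_k}|\le|W|$, Fatou already gives $\liminf_k\mathbb E(e^{x\tau+yW^{n_k}})\ge\mathbb E(e^{x\tau+yW})$, so if $\beta=0$ then for every $(x,y)$ on which $\sup_k\mathbb E(e^{x\tau+yW^{n_k}})<+\infty$ — a set containing a neighbourhood of the origin by Hölder's inequality together with $\theta_0>0$ and $\eta_0>0$ — one gets $x+my\le\widetilde J(m)<+\infty$, and comparing with the bound obtained at $x_\tau$ one reaches a contradiction. For (ii): the pointwise bound $e^{x\tau+yW^{n_k}}\le e^{x\tau+yW}\,e^{|y|(|W|-n_k)^+}$ combined with a dominated convergence argument gives the convergence, but only when a suitable exponential domination holds; to handle general $(x,y)$ I would reduce, using that $(x,y)\mapsto\Lambda(m,\beta,x,y)$ is concave and finite at the origin, to $(x,y)$ in the interior of the finiteness domain $\{(x,y):\mathbb E(e^{x\tau+yW})<+\infty\}$ (the supremum over this open set equals the supremum over $\mathbb R^2$, by taking limits along segments through the origin), and there the contributions of $\{|W|>n_k\}$ to $\mathbb E(e^{x\tau+yW^{n_k}})$ are bounded by tails of $\psi$-integrable functions, so they vanish as $k\to\infty$.

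The main obstacle is precisely step (ii): controlling $\mathbb E(e^{x\tau+yW^{n_k}})-\mathbb E(e^{x\tau+yW})$ away from the region where a clean dominated-convergence bound is available, since the truncation can \emph{increase} $e^{yW^n}$ where $yW$ is very negative. This is where the standing assumptions $\eta_0>0$ and $\theta_0>0$ are genuinely used — the first to control the left/right tails of $W$ against $\psi$, the second to keep the origin in the interior of the finiteness domain and to force $\beta>0$ — exactly paralleling the dichotomy $\theta_0<+\infty$ versus $\theta_0=+\infty$ in the proof of Lemma \ref{lemegal}.
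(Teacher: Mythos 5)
Your overall strategy matches the paper's: extract a diagonal minimizing sequence $(n_k,z_k,\beta_k)$ from \eqref{eqtildegene}, bound $\beta_k$ by evaluating at $(x_\tau,0)$, pass to a subsequence $\beta_k\to\beta$, and then pass to the limit in $\Lambda^{n_k}(z_k,\beta_k,x,y)$. Points (i) (ruling out $\beta=0$) and the bound $\beta_k\le\beta_\tau$ are handled essentially as in the paper. The genuine difference is in step (ii), and that is where your proposal has a gap.

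You correctly observe that truncation can \emph{increase} $e^{yW^n}$ on $\{yW<0\}$, so dominated convergence is not automatic, but the repair you sketch does not close the hole. Restricting to the interior of the finiteness domain $D=\{(x,y):\E(e^{x\tau+yW})<+\infty\}$ does not give you the domination you need: the pointwise bound $e^{x\tau+yW^{n_k}}\le e^{x\tau}\cdot e^{|y||W|}$ (or your version with $e^{|y|(|W|-n_k)_+}$) requires integrability with respect to the MGF of the pair $(\tau,|W|)$, not of $(\tau,W)$, and $(x,y)\in\operatorname{int}(D)$ does not even guarantee $\E(e^{x\tau})<\infty$ (think of a strongly negatively correlated example such as $W=-\tau$). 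So "contributions of $\{|W|>n_k\}$ are bounded by tails of $\psi$-integrable functions" is not substantiated in the regime where the domination is actually in question. The paper avoids this entirely by splitting on the \emph{sign} of $yW$: since $\{yW^n\le0\}=\{yW\le0\}$ for all $n$, on that set $e^{x\tau+yW^n}\le e^{x\tau}$, so dominated convergence applies as soon as $\E(e^{x\tau})<\infty$; on $\{yW>0\}$ the sequence $yW^n$ is nondecreasing to $yW$, so monotone convergence applies with no integrability hypothesis. The case $\E(e^{x\tau})=+\infty$ is then discarded as giving $\Lambda=-\infty$. This split is both shorter and sharper than the concavity reduction you propose, and I would encourage you to replace your paragraph on (ii) by it. As a secondary remark, your contradiction in step (i) ("comparing with the bound obtained at $x_\tau$") is not spelled out; the paper's phrasing — for $m\ne0$ the inequality $x+my\le\widetilde J(m)$ on a full neighbourhood of $x$-values is untenable, and for $m=0$ it forces $\theta_0\le\widetilde J(0)$ which has already been excluded — is what you should say explicitly.
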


\begin{proof}
As in the proof of Lemma \ref{lemegal}, $\sup_{x,y}\Lambda^n(m,\beta,x,y) \geq x_\tau +\beta$ so that if $J^n(m)<+\infty$ the infimum in $\beta$ has to be taken for for $\beta \leq J^n(m) - x_\tau$. 

If $\widetilde J(m) < +\infty$ one can find a sequence $(m_n,\beta_n)_{n\ge0}$ such that $m_n \to m$, $\beta_n \in(0, \beta_\tau]$ where $\beta_\tau \leq \widetilde J(m) +1 -x_\tau$ and for $n$ large enough and all $(x,y)$, $$x + m_n y - \beta_n \log \mathbb E(e^{x \tau + y W^n}) \leq \widetilde J(m) + 1/n \, .$$ Taking a subsequence if necessary we may assume that $\beta_n \to \beta_\infty$. 

We want to pass to the limit in the previous inequality. We may assume that $\mathbb E(e^{x\tau}) < +\infty$, otherwise, for all $\beta>0$, 
$$x+my - \beta \log \mathbb E(e^{x \tau + y W})=-\infty \, .$$ 
Since $e^{x\tau+yW^n} \, \mathbf 1_{y W^n\leq 0}=e^{x\tau+yW^n} \, \mathbf 1_{y W\leq 0}$ is dominated by $e^{x\tau} \, 1_{y W\leq 0}$, which is assumed to be integrable, we may apply the bounded convergence theorem and get $\lim_n \mathbf E(e^{x\tau+yW^n} \, \mathbf 1_{y W^n\leq 0}) =\mathbb E(e^{x\tau+yW} \, \mathbf 1_{y W\leq 0})$. The other part, $\lim_n \mathbf E(e^{x\tau+yW^n} \, \mathbf 1_{y W^n> 0}) =\mathbb E(e^{x\tau+yW} \, \mathbf 1_{y W> 0})$ is a consequence of the monotone convergence theorem. 

We may thus conclude that for all $(x,y)$, 
$$ x+my - \beta_\infty \log \mathbb E(e^{x \tau + y W}) \leq \widetilde J(m) \, ,$$ 
hence $J(m) \leq \widetilde J(m)$, provided $\beta_\infty >0$. If $\beta_\infty=0$ we have obtained that for all $x$ such that $\mathbb E(e^{x\tau}) < +\infty$, $x+my \leq \widetilde J(m)$ which is impossible if $m\neq 0$, or if $m=0$ and $\widetilde J(0)<\theta_0$. Since $\bar J(0) \leq \theta_0$, the case $\widetilde J(0) \geq \theta_0$ is immediate.
\end{proof}
For the converse
\begin{lemme}\label{lemtilde2}
It holds $\bar J \geq \widetilde J$.
\end{lemme}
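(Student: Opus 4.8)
\textit{Proof (sketch).} The plan is to fix $m$ with $\bar J(m)<+\infty$ (otherwise there is nothing to prove) and show that for every $\delta>0$ one has $\liminf_{n\to\infty}\inf_{|z-m|<\delta}\bar J^n(z)\le\bar J(m)$; taking the supremum over $\delta$ in \eqref{eqtildegene} then yields $\widetilde J(m)\le\bar J(m)$. First dispose of the degenerate case $m=0$ with $\bar J(0)=\theta_0$: the null measure is admissible in \eqref{eqbarJ1} for $z=0$ and $I^n$ assigns it the value $\theta_0$, so $\bar J^n(0)\le\theta_0$ for every $n$ and the claim is immediate. Otherwise pick a minimizing sequence $(\mu_k)_{k\ge0}$ in ${\bar{\mathcal M}}^1(\mathcal X)$ with $\mu_k(\varphi)=m$ and $I(\mu_k)\le\bar J(m)+\eta_k$, $\eta_k\downarrow0$; in each of the remaining cases $\mu_k$ is not the null measure (for $m\ne0$ because the constraint $\mu_k(\varphi)=m$ forbids it, and for $m=0$ because then $\bar J(0)=J(0)<\theta_0$ while the null measure has $I=\theta_0$), hence $0<\mu_k(1/u)<+\infty$ and $\bar\mu_k\ll\psi$ with $H(\bar\mu_k|\psi)<+\infty$.

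The heart of the argument is to transport $\mu_k$ to the truncated model. Let $t_n:(u,w)\mapsto(u,\,w\vee(-n)\wedge n)$ and set $\mu_k^n:=(t_n)_*\mu_k\in{\bar{\mathcal M}}^1(\mathcal X)$. Since $t_n$ fixes the first coordinate, $\mu_k^n(\mathcal X)=\mu_k(\mathcal X)$ and $\mu_k^n(1/u)=\mu_k(1/u)\in(0,+\infty)$, and one checks directly that $\overline{\mu_k^n}=(t_n)_*\bar\mu_k$, which is absolutely continuous with respect to $\psi^n=(t_n)_*\psi$ with density the $\psi$-conditional expectation of the density of $\bar\mu_k$ given $t_n$. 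Jensen's inequality applied to the convex map $x\mapsto x\log x$ (equivalently, restricting the variational formula for the relative entropy to functions of $t_n$) then gives $H(\overline{\mu_k^n}|\psi^n)\le H(\bar\mu_k|\psi)$. Substituting into \eqref{eqI1} and using the two mass identities yields $I^n(\mu_k^n)\le I(\mu_k)\le\bar J(m)+\eta_k$.

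It then remains to control the constraint, that is to prove $m_k^n:=\mu_k^n(\varphi)=\int\tfrac{w\vee(-n)\wedge n}{u}\,d\mu_k\to m$ as $n\to\infty$ for each fixed $k$. Here I would invoke the standing hypothesis $\eta_0>0$: for $\eta\in(0,\eta_0)$ the entropy inequality $\eta\,\bar\mu_k(|w|)\le\log\psi(e^{\eta|w|})+H(\bar\mu_k|\psi)<+\infty$ shows $\bar\mu_k(|w|)<+\infty$, whence $\mu_k(|w|/u)=\mu_k(1/u)\,\bar\mu_k(|w|)<+\infty$, and dominated convergence (the integrand being dominated by $|w|/u$) gives $m_k^n\to\mu_k(\varphi)=m$. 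Consequently, for fixed $k$ and $\delta>0$, as soon as $n$ is large enough that $|m_k^n-m|<\delta$ one has $\inf_{|z-m|<\delta}\bar J^n(z)\le\bar J^n(m_k^n)\le I^n(\mu_k^n)\le\bar J(m)+\eta_k$, the second inequality holding because $\mu_k^n$ is admissible in \eqref{eqbarJ1} for $z=m_k^n$; letting first $n\to\infty$ and then $k\to\infty$ proves the claim. I expect the only genuine obstacle to be this last convergence $m_k^n\to m$: without $\eta_0>0$ the truncation of $W$ could genuinely displace the constraint $\nu(\varphi)=m$, which is precisely why the conclusion must be weakened to asymptotic deviation bounds when $\eta_0<+\infty$. \hfill$\square$
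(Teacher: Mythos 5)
Your proof is correct and takes a genuinely different route from the paper's. Where the paper sets $\mu_k^n=\mathbf 1_{|w|<n}\,\mu_k$ (restriction, which loses mass) and then verifies by an explicit computation of the densities that $\mu_k^n(\mathcal X)$, $\mu_k^n(1/u)$ and $H(\bar\mu_k^n|\psi^n)$ all \emph{converge} to the corresponding quantities for $\mu_k$, you instead push $\mu_k$ forward by the truncation $t_n:(u,w)\mapsto(u,w\vee(-n)\wedge n)$. This preserves $\nu(\mathcal X)$ and $\nu(1/u)$ exactly, and the data-processing (Jensen) inequality gives the one-sided bound $H(\overline{\mu_k^n}|\psi^n)\le H(\bar\mu_k|\psi)$ directly, hence $I^n(\mu_k^n)\le I(\mu_k)$ without passing to the limit. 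Since only an upper bound is needed, this is cleaner than the paper's convergence argument. You also take more care than the paper about the remaining step $m_k^n\to m$: the paper states that this is ``easily seen'', while you make explicit that it rests on $\bar\mu_k(|w|)<+\infty$, extracted from the variational entropy inequality $\eta\,\bar\mu_k(|w|)\le\log\psi(e^{\eta|w|})+H(\bar\mu_k|\psi)$ for $\eta<\eta_0$, and you correctly flag that this is where the standing assumption $\eta_0>0$ enters. Minor nit: the remark at the end about the dichotomy $\eta_0<+\infty$ is tangential — the loss of the full LDP in that regime comes from the exponentially-good-approximation step (Lemma \ref{lemreduc1}), not from this lemma, which holds for any $\eta_0>0$ — but that does not affect the proof's validity.
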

\begin{proof}
We shall follow the same route as for the proof of Lemma \ref{lemegal2}. We may similarly assume that $J(m)$ is finite and $J(0)<\theta_0$, so that the minimizing measure is not the null measure. We then consider a sequence $\mu_k$ such that $I(\mu_k)\leq \bar J(m) + \eta_k$, and we may assume again that $\mu_k(\mathcal X)\geq \chi>0$ for all $k$ so that $\sup_k H(\bar \mu_k|\psi) < +\infty$. 

We may decompose $\psi^n$ as $$\psi^n(du,dw)= \mathbf 1_{|w|<n} \psi(du,dw) + \gamma^n_+(du,dw) + \gamma^n_-(du,dw)$$ where $\gamma^n_+$ is the joint law of $(\tau, n \, \mathbf 1_{W\geq n})$ and $\gamma^n_-$ is the joint law of $(\tau, -n \, \mathbf 1_{W\leq -n})$. Of course $\psi^n$ weakly converges towards $\psi$.

We now introduce $\mu_k^n= \mathbf 1_{|w|<n} \, \mu_k$ so that $$\bar \mu^n_k= \frac{\mu_k(1/u)}{\mu_k(\mathbf 1_{|w|<n} \, (1/u))} \, \psi(|w|<n) \, \frac{d\bar \mu_k}{d\psi} \, \mathbf 1_{|w|<n} \, \psi^n \, .$$ It is then easily seen that $\mu^n_k$ weakly converges to $\mu_k$, that $\mu_k^n(\varphi)=m_k^n$ converges to $\mu_k(\varphi)=m$ and finally since $\mathbf 1_{|w|<n} \, \psi^n =\mathbf 1_{|w|<n} \, \psi$, denoting by $$c_k^n= \frac{\mu_k(1/u)}{\mu_k(\mathbf 1_{|w|<n} \, (1/u))} \, \psi(|w|<n)$$ that 
$$
H(\bar \mu_k^n|\psi^n)=\int c_k^n \, \log \left(c_k^n \, \frac{d\bar \mu_k}{d\psi}\right) \, \mathbf 1_{|w|<n} \, d\bar \mu_k $$ goes to $H(\bar \mu_k|\psi)$ as $n$ goes to infinity. We may thus conclude as in the proof of Lemma \ref{lemegal2}.
\end{proof}
\medskip

In order to get an LDP result for $(Z_t/t)_{t\ge0}$ it remains to study the approximation of $(Z_t/t)_{t\ge0}$ by $\{(Z^n_t/t)_{t\ge0}\}_{n\in\N}$. We may decompose
\begin{equation}\label{eqdiff1}
|Z_t - Z_t^n| = \sum_{i=1}^{M_t} (W_i-n)_+ \, + \, \sum_{i=1}^{M_t} (W_i+n)_-\,,
\end{equation}
 where $u_+=\max(u,0)$ and $u_-=\max(-u,0)$. We then have
\begin{lemme}\label{lemreduc1}
Assume that $\theta_0>0$ and $\eta_0>0$. For all $\delta >0$, $$\lim_{n \to \infty} \, \limsup_{t \to \infty} \, \frac 1t \, \log \mathbb P\left(\left|\frac{Z_t}{t} - \frac{Z^n_t}{t}\right|> {2}\delta\right) \leq  - \, \frac{\eta_0 \, \delta}{2} \, .$$ 
In particular if $\eta_0=+\infty$, $\{(Z^n_t/t)_{t\ge0}\}_{n\in\N} $ is an exponentially good approximation of $(Z_t/t)_{t\ge0}$.
\end{lemme}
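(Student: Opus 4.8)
The plan is to estimate each of the two non-negative sums in the decomposition \eqref{eqdiff1} separately by the same argument, so it suffices to control $\mathbb P\big(\frac 1t \sum_{i=1}^{M_t}(W_i-n)_+ > \delta\big)$; the term with $(W_i+n)_-$ is handled identically by symmetry (replacing $W$ by $-W$). First I would reduce to a sum over a deterministic number of terms: on the event $\{M_t \leq \lfloor At\rfloor\}$ we can bound $\sum_{i=1}^{M_t}(W_i-n)_+ \leq \sum_{i=1}^{\lfloor At\rfloor}(W_i-n)_+$, and the complementary event $\{M_t > \lfloor At\rfloor\} = \{S_{\lfloor At\rfloor}\leq t\}$ is exponentially negligible at rate that can be made arbitrarily large by taking $A$ large (as already used in the proof of Lemma \ref{lemwbound}, via $\mathbb P(S_{\lfloor At\rfloor}\leq t)\leq \exp(t+\lfloor At\rfloor\log\mathbb E(e^{-\tau}))$ and $\theta_0>0$ is not even needed here, only that $\mathbb E(e^{-\tau})<1$).

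Next I would apply a Chernoff bound to the i.i.d. sum $\sum_{i=1}^{\lfloor At\rfloor}(W_i-n)_+$. For any $\eta \in (0,\eta_0)$,
\begin{equation*}
\mathbb P\Big(\sum_{i=1}^{\lfloor At\rfloor}(W_i-n)_+ > \delta t\Big) \leq e^{-\eta\delta t}\,\Big(\mathbb E\big[e^{\eta(W-n)_+}\big]\Big)^{\lfloor At\rfloor}\,.
\end{equation*}
The point is that $\mathbb E[e^{\eta(W-n)_+}] = \mathbb P(W\leq n) + \mathbb E[e^{\eta(W-n)}\mathbf 1_{W>n}] = 1 + \mathbb E[(e^{\eta(W-n)}-1)\mathbf 1_{W>n}]$, and since $\mathbb E[e^{\eta W}]<+\infty$ the dominated convergence theorem gives $\mathbb E[(e^{\eta(W-n)}-1)\mathbf 1_{W>n}] \to 0$ as $n\to\infty$; write this quantity as $c_n(\eta)\to 0$. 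Then $\frac 1t\log$ of the right-hand side has $\limsup_{t\to\infty}$ equal to $-\eta\delta + A\log(1+c_n(\eta))$, and letting $n\to\infty$ kills the second term, leaving $-\eta\delta$. Combining the two sums (the factor $2$ on the left of the displayed inequality accounts for splitting $\{|Z_t/t-Z_t^n/t|>2\delta\}$ into the two events $\{\frac 1t\sum(W_i-n)_+>\delta\}$ and $\{\frac 1t\sum(W_i+n)_-<-\delta\}$... sorry, $>\delta\}$), and using $\frac 1t\log(a_t+b_t)\sim\max$, we obtain after letting $n\to\infty$ and then $A\to\infty$ that the double limit is $\leq -\eta\delta$. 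Since $\eta<\eta_0$ is arbitrary we may optimize; taking $\eta$ close to $\eta_0$ (and noting $\eta=\eta_0$ itself may fail integrability, so one keeps a cushion — the stated bound $-\eta_0\delta/2$ is a safe choice, comfortably attained by any $\eta\in[\eta_0/2,\eta_0)$) yields the claim. When $\eta_0=+\infty$ the same computation gives that the double limit is $-\infty$ for every $\delta>0$, which is exactly the definition of an exponentially good approximation in Definition \ref{defexpgood}.

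The main obstacle is purely bookkeeping: one must be careful that in the Chernoff step the number of summands $\lfloor At\rfloor$ is \emph{coupled} to $t$, so the exponent is $-\eta\delta t + \lfloor At\rfloor\log(1+c_n(\eta))$ and one cannot send $n\to\infty$ before dividing by $t$ and taking $\limsup_t$ — the order of limits ($t\to\infty$, then $n\to\infty$, then $A\to\infty$) is what makes the bound $c_n(\eta)\to 0$ usable while $A$ is still fixed. A second minor point is the truncation of $M_t$: one needs the exponential negligibility of $\{M_t>\lfloor At\rfloor\}$ to hold \emph{uniformly in $n$}, which is automatic since that event does not involve the $W_i$ at all. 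No further subtlety arises; $\theta_0>0$ is invoked only to guarantee $\mathbb E(e^{-\tau})<1$ (indeed $\theta_0>0$ is not strictly needed for that, but it is harmless to assume it as in the statement).
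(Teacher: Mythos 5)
Your proof is correct, and it takes a genuinely simpler route than the paper's, while yielding a slightly sharper bound.

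The paper's proof proceeds differently at two places. First, instead of bounding $\sum_{i=1}^{M_t}(W_i-n)_+$ by $\sum_{i=1}^{\lfloor ct\rfloor}(W_i-n)_+$ directly on $\{M_t\le\lfloor ct\rfloor\}$, the paper splits the sum as $\sum_{i\le \lfloor ct\rfloor}+\sum_{\lfloor ct\rfloor <i\le M_t}$ and then distinguishes the cases $M_t<2\lfloor ct\rfloor$ and $M_t\ge 2\lfloor ct\rfloor$, arriving at $2\,\Proba\bigl(\sum_{j\le\lfloor ct\rfloor}(W_j-n)_+>\delta t/2\bigr)+\Proba(M_t\ge2\lfloor ct\rfloor)$; the threshold $\delta t/2$ is what produces the factor $1/2$ in the stated rate $-\eta_0\delta/2$. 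Your direct comparison avoids this halving and, after optimizing $\eta\uparrow\eta_0$, actually gives $-\eta_0\delta$. Second, the paper controls the i.i.d.\ sum via Cramer's theorem (introducing $\Psi_n^*$ and its monotonicity on $[\E((W-n)_+),\infty)$) and controls $\{M_t\ge 2\lfloor ct\rfloor\}$ via the renewal LDP of Tiefeng (checking $J_\tau(u)\to\infty$), whereas you use the bare Chernoff bound for the first and the elementary estimate $\Proba(S_{\lfloor At\rfloor}\le t)\le\exp\bigl(t+\lfloor At\rfloor\log\E(\e^{-\tau})\bigr)$ for the second — both of which are more elementary and self-contained. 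Your handling of the order of limits is also cleaner: the paper chooses a single sequence $c_n\to\infty$ calibrated so that $c_n\,\E((W-n)_+)\to 0$ and $c_n\log(1+c_n(\lambda))\to 0$ simultaneously, while you simply observe that for each fixed $A$ the $M_t$-term is $n$-independent, send $n\to\infty$ to kill the Chernoff correction, and then send $A\to\infty$. One small precision: $\{M_t>\lfloor At\rfloor\}$ equals $\{S_{\lfloor At\rfloor+1}\le t\}$, not $\{S_{\lfloor At\rfloor}\le t\}$, but the inclusion $\{M_t>\lfloor At\rfloor\}\subseteq\{S_{\lfloor At\rfloor}\le t\}$ is all you use, so nothing is affected. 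Your remark that $\theta_0>0$ is dispensable for the $M_t$ estimate (only $\tau>0$ a.s.\ is needed, giving $\E(\e^{-\tau})<1$) is also accurate; the paper uses $\theta_0>0$ through $\E(\tau)<\infty$ when invoking the renewal LDP, a dependence your argument does not have.
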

\begin{proof}
Since $\eta_0$ and $\theta_0$ are positive, $\mathbb E(\tau)$ and $\mathbb E(|W|)$ are both finite.

From \eqref{eqdiff1}, we deduce that 
$$P\left(\left|\frac{Z_t}{t} - \frac{Z^n_t}{t}\right|> 2\delta\right) \le \mathbb P\left(\sum_{i=1}^{M_t} (W_i-n)_- > \delta t\right)+\mathbb P\left(\sum_{i=1}^{M_t} (W_i-n)_+ > \delta t\right)
$$
Note that using the elementary $\log(a+b) \leq \max(\log(2a),\log(2b))$ it is enough to look at $$\mathbb P\left(\sum_{i=1}^{M_t} (W_i-n)_+ > \delta t\right) \, ,$$ since the other term can be treated similarly.

Using that the $(W_i)_{i\ge1}$'s are i.i.d. we may write for $\delta>0$ and $c>0$, (as usual an empty sum is equal to $0$ by convention)
\begin{align*}
&\Proba \left( \sum_{i=1}^{\rnw_t} (W_i - n)_+ > \delta  t \right)\\
&\leq \Proba \left( \sum_{i=1}^{\lfloor ct\rfloor} (W_i - n)_+ > \frac{\delta t}{2} \right) + \Proba \left(\sum_{i=\lfloor ct\rfloor +1}^{\rnw_t} (W_i - n)_+ > \frac{\delta t }{2} \right)\\
&\leq  \Proba \left( \sum_{i=1}^{\lfloor ct\rfloor} (W_i - n)_+ > \frac{\delta t}{2} \right) + \Proba \left( \left\{\sum_{i=\lfloor ct\rfloor +1}^{\rnw_t} (W_i - n)_+ > \frac{\delta t }{2} \right\} \cap \left\{ 1+\lfloor ct\rfloor \leq \rnw_t < 2\lfloor ct\rfloor \right\} \right) \\
& \qquad + \Proba \left(\left\{\sum_{i=\lfloor ct\rfloor +1}^{\rnw_t} (W_i - n)_+ > \frac{\delta t }{2} \right\} \cap \left\{ \rnw_t \geq 2\lfloor ct\rfloor \right\} \right) \\
& \leq 2 \Proba \left( \sum_{j=1}^{\lfloor ct\rfloor} (W_j - n)_+ > \frac{\delta t }{2}  \right) + \Proba \left(\rnw_t   \geq 2\lfloor ct\rfloor \right)
\end{align*}
\medskip 

\textbf{Study of $\Proba \left(\rnw_t   \geq 2\lfloor ct\rfloor\right)$ \, .} \quad Start with the second term in the sum above. According to theorem 2.3 in \cite{tiefeng_large_1994}, we know that $\rnw_t/t$ satisfies a LDP with rate function $J_{\tau}$ given by
\begin{align*}
	J_{\tau}(u) = \left\lbrace
	\begin{array}{l}
	\sup_{\lambda\in\mathbb{R}} \{ \lambda - u \log \E(\e^{\lambda \tau}) \} \text{ if } u \geq 0 \,,\\
	\infty \quad \quad \quad \quad \quad \quad \quad \quad \quad \text{if } u < 0\,.
	\end{array}
	\right.
\end{align*}
Notice that $J_\tau(u)=u \, \Lambda^*(1/u,0)$ for $u>0$. In addition (see Lemma 2.6 in \cite{tiefeng_large_1994}) the supremum is achieved for $\lambda \leq 0$ if $u \in (1/\mathbb E(\tau) \, , \, +\infty)$ and $J_\tau$ is non-decreasing on this interval.

It follows that for $2c > 1/\mathbb E(\tau)$,
\begin{equation}\label{eqldM}
\limsup_{t \rightarrow  \infty} \frac{1}{t}  \log \Proba \left(\rnw_t  \geq 2 \lfloor ct\rfloor\right) \le - J_{\tau}(\lfloor ct\rfloor) \, .
\end{equation}
In order to get $\lim_{n\to\infty} \, \limsup_{t \rightarrow + \infty} \frac{1}{t} \log  \Proba \left(\rnw_t \geq 2\lfloor ct\rfloor \right) \le -\infty$ for some sequence $c_n$ (to be chosen later) it remains to show that 
	\begin{align*}
	J_{\tau}(u) \cvg{u}{\infty} + \infty.
	\end{align*}
Recall that  $x_\tau$ satisfies $\E(\e^{x_{\tau} \tau}) = \e^{-1}$, so that for $u\geq 0$, 
$$
	J_{\tau}(u) = \sup_{\lambda\in\mathbb{R}} \{ \lambda - u \log \E(\e^{\lambda \tau}) \}\geq x_{\tau} - u \log \E(\e^{x_{\tau} \tau})
	\geq  u + x_{\tau} 
$$
yielding the desired result.	
\medskip	 

\textbf{Study of $\Proba \left( \sum_{j=1}^{\lfloor ct \rfloor} (W_j - n)_+ > \frac{\delta t }{2}  \right)$.} We handle this term with Cramer's theorem. Defining
\begin{align*}
\Psi_n (\lambda) = \log \E \left[ \e^{\lambda(W-n)_+} \right]\,,\\
\Psi_n^* (x) = \sup_{\lambda\in\mathbb{R}} \lbrace \lambda x - \Psi_n(\lambda)  \rbrace\,,
\end{align*}
we have
\begin{eqnarray*}
 \limsup_{t \rightarrow \infty} \frac{1}{t} \log \Proba \left( \sum_{j=1}^{\lfloor ct \rfloor} (W_j - n)_+ >  \delta t/2\right) &=& 
 \limsup_{t \rightarrow \infty} \frac{c}{\lfloor ct \rfloor} \log \Proba \left( \sum_{j=1}^{\lfloor ct \rfloor} (W_j - n)_+ >  \delta t/2\right) \\ &\leq& \limsup_{t \rightarrow \infty} \frac{c}{\lfloor ct \rfloor} \log \Proba \left( \sum_{j=1}^{\lfloor ct \rfloor} (W_j - n)_+ >  \delta \lfloor ct \rfloor/2c\right) \\ &\leq& - \, c \, \inf_{x \in [\delta/2c, + \infty)} \Psi_n^*(x).
\end{eqnarray*}
As the function $x \mapsto \Psi_n^*(x)$ is non-decreasing on $[\mathbb E((W-n)_+),+\infty)$, we have 
\begin{align*}
\limsup_{t \rightarrow \infty}\frac{1}{t} \log \Proba \left( \sum_{j=1}^{\lfloor ct \rfloor} (W_j - n)_+ > \delta t/2 \right)  \le - \, c \, \Psi_n^*(\delta/2c) \, ,
\end{align*}
provided $\delta/2c \geq \mathbb E((W-n)_+)$. Notice that for $\lambda<\eta_0$, 
$$c \, \Psi_n^*(\delta/2c) \, \geq \, \frac{\lambda \delta}{2} \, - \, 
c \, \log \left(1 + \mathbb E\left[ (e^{\lambda (W-n)} - 1) \, \1_{W>n}\right]\right)\,,
$$ 
Since both $\mathbb E((W-n)_+)$ and $\log \left(1 + \mathbb E\left[(e^{\lambda (W-n)} - 1) \1_{W>n}\right]\right)$ are going to $0$ as $n\to \infty$, it is always possible to choose $c_n$ growing to infinity such that as $n\to \infty$
$$c_n \, \mathbb E((W-n)_+) \to 0  \; \textrm{ and } \; c_n \, \log \left(1 + \mathbb E\left[(e^{\lambda (W-n)} - 1) \1_{W>n}\right]\right) \to 0 \,,$$
 We get 
$$\lim_{n\to\infty} \; \limsup_{t \rightarrow \infty} \frac{1}{t} \log \Proba \left( \sum_{j=1}^{\lfloor ct \rfloor} (W_j - n)_+ >  \delta t/2\right) \le - \,  \frac{\lambda \delta}{2} \, .$$ 
We may optimize in $\lambda$ and plug the same sequence $c_n$ in \eqref{eqldM} completing the proof. 
\end{proof}
\medskip

We will use the previous lemma to deduce
\begin{cor}\label{cortight}
Under the assumptions of Lemma \ref{lemreduc1}, $(Z_t/t)_{t \geq 0}$ is exponentially tight, i.e. for all $\alpha>0$, there exists a compact set $K_{\alpha}$ such that
	$$\limsup_{t \to \infty} \frac{1}{t} \log \Proba \left( \frac{Z_t}{t} \notin K_{\alpha}^c  \right) < - \alpha.$$
\end{cor}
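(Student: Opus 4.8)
The plan is to establish exponential tightness of $(Z_t/t)_{t\ge0}$ by combining the exponential control from Lemma \ref{lemreduc1} with the exponential tightness of the bounded-$W$ approximations $(Z^n_t/t)_{t\ge0}$, which follows from Theorem \ref{thmwbound}. The key point is that a family admitting a full LDP with a good rate function is automatically exponentially tight (by the compactness of level sets together with the upper bound \eqref{eqLDP2}), so each $(Z^n_t/t)_{t\ge0}$ is exponentially tight. It then remains to transfer this tightness to $(Z_t/t)_{t\ge0}$ using the exponentially good (or almost good) approximation from Lemma \ref{lemreduc1}.

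More concretely, I would proceed as follows. Fix $\alpha>0$. Choose $\delta>0$ small enough that $\eta_0\delta/2 > \alpha$ (or, if $\eta_0=+\infty$, any $\delta$ works), and then, by Lemma \ref{lemreduc1}, pick $n$ large enough so that
\begin{equation*}
\limsup_{t\to\infty} \frac 1t \log \mathbb P\left(\left|\frac{Z_t}{t}-\frac{Z^n_t}{t}\right| > 2\delta\right) < -\alpha \, .
\end{equation*}
Next, since $(Z^n_t/t)_{t\ge0}$ satisfies a full LDP with good rate function $\bar J^n$ by Theorem \ref{thmwbound}, it is exponentially tight: there is a compact set $\widetilde K_\alpha\subset\mathbb R$ with $\limsup_{t\to\infty}\frac1t\log\mathbb P(Z^n_t/t\notin \widetilde K_\alpha) < -\alpha$. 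Set $K_\alpha$ to be the closed $2\delta$-neighbourhood of $\widetilde K_\alpha$, which is still compact in $\mathbb R$. Then on the event $\{Z_t/t\notin K_\alpha\}$ we must have either $Z^n_t/t\notin\widetilde K_\alpha$ or $|Z_t/t - Z^n_t/t|>2\delta$, so
\begin{equation*}
\mathbb P\left(\frac{Z_t}{t}\notin K_\alpha\right) \le \mathbb P\left(\frac{Z^n_t}{t}\notin\widetilde K_\alpha\right) + \mathbb P\left(\left|\frac{Z_t}{t}-\frac{Z^n_t}{t}\right|>2\delta\right) \, ,
\end{equation*}
and applying $\limsup_{t\to\infty}\frac1t\log$ together with the elementary inequality $\log(a+b)\le\log 2 + \max(\log a,\log b)$ gives $\limsup_{t\to\infty}\frac1t\log\mathbb P(Z_t/t\notin K_\alpha) < -\alpha$, which is exactly exponential tightness.

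I do not expect any serious obstacle here; the statement is essentially a soft consequence of the two ingredients already in place. The only mildly delicate point is bookkeeping the order of quantifiers — one must first fix $\alpha$, then choose $\delta$ depending on $\alpha$ and $\eta_0$, then choose $n$ depending on $\delta$ and $\alpha$ via Lemma \ref{lemreduc1}, and only then invoke the exponential tightness of the $n$-th approximation — but this is routine. (Note that in the case $\eta_0<+\infty$ the bound $-\eta_0\delta/2$ in Lemma \ref{lemreduc1} is still negative and can be made smaller than $-\alpha$ by taking $\delta$ large; since $K_\alpha$ need only be compact, enlarging $\delta$ is harmless.) Once Corollary \ref{cortight} is established, it will be combined with the weak LDP obtained from Lemma \ref{lemtilde1}, Lemma \ref{lemtilde2} and Theorem \ref{thmexpgood}, via Lemma \ref{lem_exptightLDP}, to conclude the full LDP in the case $\eta_0=+\infty$ of Theorem \ref{thmmain}.
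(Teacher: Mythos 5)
Your argument is correct and follows essentially the same route as the paper: decompose $\mathbb P(Z_t/t\notin K_\alpha)$ into the approximation error controlled by Lemma \ref{lemreduc1} and the tails of $Z^n_t/t$, which are controlled via the compact level sets of the good rate function $\bar J^n$ from Theorem \ref{thmwbound} (your appeal to ``full LDP with good rate function on $\mathbb R$ implies exponential tightness'' is exactly the paper's explicit choice of a level $B_\alpha$ with $\bar J^n>\alpha$ outside $[-B_\alpha,B_\alpha]$). The only slip is the phrase ``choose $\delta$ small enough that $\eta_0\delta/2>\alpha$'', which should read ``large enough''; you correct this yourself in the closing parenthetical, so no gap remains.
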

\begin{proof}
	Since $Z^n_t/t$ is an approximation of $Z_t/t$ and satisfies a full LDP according to Theorem \ref{thmwbound}, we can decompose the probability as following: for each $n$, and for all $\delta$: 
	\begin{align}
	\Proba \left( \frac{Z_t}{t} \notin [-A, A] \right) 
	&\leq \Proba \left(\left|\frac{Z_t}{t}- \frac{Z^n_t}{t} \right| > \delta \right) + \Proba \left( \frac{Z^n_t}{t} \notin [-A+ \delta, A- \delta] \right)  \nonumber \\
	&\leq \Proba \left( \left|\frac{Z_t}{t}- \frac{Z^n_t}{t} \right| > \delta \right) + \Proba \left( \frac{Z^n_t}{t} < - A + \delta \right)  + \Proba \left( \frac{Z^n_t}{t} > A - \delta \right).\nonumber\\
	&\leq 3 \max\left(\Proba \left( \left|\frac{Z_t}{t}- \frac{Z^n_t}{t} \right| > \delta \right) ,\Proba \left( \frac{Z^n_t}{t} < - A + \delta \right) ,\Proba \left( \frac{Z^n_t}{t} > A - \delta \right)\right).\label{eq:maj1_exptight}
	\end{align}
	
	By Lemma \ref{lemreduc1}, $Z^n_t/t$ and $Z_t/t$ satisfies
	$$	\forall \delta>0, \lim_{n\to\infty}\limsup_{t\to\infty} \frac{1}{t} \log \Proba \left( \left|\frac{Z_t}{t}- \frac{Z^n_t}{t} \right| > \delta \right) = - \frac{\eta_0 \delta}{4},$$
	i.e. 
	\begin{align}\label{eq_nalphadelta}
	\forall \alpha >0, \forall \delta>\frac{2 \alpha}{\eta_0}, \exists n(\alpha, \delta), \forall n> n(\alpha, \delta), \limsup_{t\to\infty}  \frac{1}{t} \log \Proba \left( \left|\frac{Z_t}{t}- \frac{Z^n_t}{t} \right| > \delta \right) \leq - \alpha.
	\end{align}

	We just have to study $\Proba \left( \frac{Z^n_t}{t} > A - \delta \right)$ and the symmetric case. We know from Theorem \ref{thmwbound} that: 
	\begin{align*}
	\limsup_{t\to\infty} \frac{1}{t} \log \Proba \left( \frac{Z^n_t}{t} > B \right) \leq - \inf_{m > B} \bar J^n(m).
	\end{align*}
Since $\bar J^n$ has compact level sets, for all $\alpha >0$ one can choose a level $B_{\alpha}$ such that $\forall m > B_{\alpha}, J^n(m)> \alpha$. The result follows by choosing $A=B_\alpha +\delta$.
\end{proof}
\medskip
	
\paragraph{Proof of Theorem \ref{thmmain}}
In the case where $\eta_0=+\infty$, using the approximation $W^n$, Lemma \ref{lemtilde1} and Lemma \ref{lemtilde2} allow to obtain the weak LDP. The full LDP derives from Corollary \ref{cortight} combined with Lemma \ref{lem_exptightLDP}.
\medskip

If $\eta_0 <+\infty$ we only obtain asymptotic deviation bounds. Recall that $m=\mathbb E(W)/\mathbb E(\tau)$ is the limit of $Z_t/t$ as $t \to +\infty$.For all $\kappa \in (0,1)$ and $a>0$, it holds $$\mathbb P\left(\frac{Z_t}{t} \geq m+a\right) \leq \mathbb P\left(\frac{Z^n_t}{t} \geq m+\kappa a\right) +\, \mathbb P\left(\left|\frac{Z_t}{t} - \frac{Z^n_t}{t}\right| \geq (1-\kappa)a\right)\,,$$ 
so that, for all $n\ge0$,
\begin{align*}
\limsup_{t \to \infty} &\log \mathbb P\left(\frac{Z_t}{t} \geq m+a\right) \\
&\leq \; \max \left[ \limsup_{t \to +\infty} \log \mathbb P\left(\frac{Z^n_t}{t} \geq m+\kappa a\right) \, ; \, \limsup_{t \to \infty} \log \mathbb P\left(\left|\frac{Z_t}{t} - \frac{Z^n_t}{t}\right| \geq (1-\kappa)a\right)  \right] \, .
\end{align*} 
Taking the $\liminf$ in $n$ we deduce 
\begin{eqnarray*}
\limsup_{t \to \infty} \log \mathbb P\left(\frac{Z_t}{t} \geq m+a\right) &\leq& \max \left[\liminf_{n\to\infty} (- \inf_{z \geq m+\kappa a} \bar J^n(z)) \, ; \, - \, \frac{\eta_0 \, (1-\kappa a)}{4} \right]\\ &\leq& - \, \min \left[\limsup_{n\to\infty} (\inf_{z \geq m+\kappa a} \bar J^n(z)) \, ;   \, \frac{\eta_0 \, (1-\kappa a)}{4} \right] \, .
\end{eqnarray*}
To complete the proof of the Theorem it is enough to prove
\begin{lemme}\label{lemdefin}
Assume $\eta_0>\infty$, then for any $z_0\in\mathbb R$, 
$$\limsup_{n\to\infty} (\inf_{z \geq z_0} \bar J^n(z)) \geq \inf_{z \geq z_0} \bar J(z) \, .$$
\end{lemme}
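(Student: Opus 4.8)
The plan is to mimic Step 3 of the proof of Theorem \ref{thmwbound}, now applied to the closed half-line $F=[z_0,+\infty)$; the one genuinely new input needed is a coercivity bound on the $\bar J^n$ that is uniform in $n$. First I would set $L:=\limsup_{n\to\infty}\inf_{z\geq z_0}\bar J^n(z)$. If $L=+\infty$ there is nothing to prove, so I assume $L<+\infty$, pick a subsequence $(n_k)_k$ along which $\inf_{z\geq z_0}\bar J^{n_k}(z)\to L$, and then choose points $z_k\geq z_0$ with $\bar J^{n_k}(z_k)\leq\inf_{z\geq z_0}\bar J^{n_k}(z)+1/k$, so that $\limsup_k\bar J^{n_k}(z_k)\leq L$.

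Next I would establish the uniform coercivity. Let $x_\tau<0$ be as in Lemma \ref{lemegal}, so that $\mathbb E(e^{x_\tau\tau})=e^{-1}$. For $m\neq 0$, fix $y_0$ with $0<|y_0|<\eta_0$ and of the same sign as $m$; since $|W^n|\leq|W|$ one has $G:=\log\mathbb E(e^{|y_0|\,|W|})<+\infty$ and $\log\mathbb E(e^{y_0 W^n})\leq G$ for every $n$. Bounding the inner supremum in $\bar J^n(m)=J^n(m)=\inf_{\beta>0}\sup_{x,y}\Lambda^n(m,\beta,x,y)$ from below by the two test points $(x,y)=(x_\tau,0)$ and $(x,y)=(0,y_0)$ gives, for every $\beta>0$,
\[
\sup_{x,y}\Lambda^n(m,\beta,x,y)\geq\max\bigl(x_\tau+\beta,\,m y_0-\beta G\bigr).
\]
Since $m y_0>0>x_\tau$, the right-hand side is minimised over $\beta>0$ at a positive crossing point, with value $\phi(m):=(m y_0+x_\tau G)/(1+G)$, independent of $n$ and satisfying $\phi(m)\to+\infty$ as $|m|\to+\infty$. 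Hence $\bar J^n(m)\geq\phi(m)$ for $m\neq 0$; since also $\bar J^n\geq 0$, the sublevel sets $\{\bar J^n\leq L+1\}$ lie in one fixed bounded set, so $(z_k)_k\subset[z_0,R]$ for some $R$ and all large $k$.

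Finally I would extract a further subsequence with $z_k\to z_\infty\in[z_0,R]$. For any $\delta>0$ and $k$ large we have $|z_k-z_\infty|<\delta$, hence $\inf_{|z-z_\infty|<\delta}\bar J^{n_k}(z)\leq\bar J^{n_k}(z_k)$; since a $\liminf$ over all indices is no larger than the $\liminf$ along $(n_k)$, this gives $\liminf_{n\to\infty}\inf_{|z-z_\infty|<\delta}\bar J^n(z)\leq\limsup_k\bar J^{n_k}(z_k)\leq L$. Taking the supremum over $\delta>0$ and using $\bar J=\widetilde J$ (Lemmas \ref{lemtilde1} and \ref{lemtilde2}) yields $\bar J(z_\infty)=\widetilde J(z_\infty)\leq L$, and since $z_\infty\geq z_0$ we conclude $\inf_{z\geq z_0}\bar J(z)\leq\bar J(z_\infty)\leq L$, which is the claim. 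I expect the uniform coercivity estimate to be the main obstacle: it is what keeps the near-optimal points $z_k$ from escaping to infinity so that a subsequential limit exists, and it is exactly there that the hypothesis $\eta_0>0$ is used, through the exponential moment of $|W|$ which dominates all the truncations $|W^n|$ at once; the remaining steps are the soft subsequence argument already carried out for Theorem \ref{thmwbound}.
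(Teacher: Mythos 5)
Your proof is correct and follows essentially the same route as the paper's: extract a near-minimizing sequence $(z_k)$, show it is bounded by an $n$-uniform coercivity bound on $\bar J^n$ obtained from test points of the form $(x_\tau,0)$ and $(0,y_0)$ with $|y_0|<\eta_0$ and $|W^n|\leq|W|$, pass to a convergent subsequence, and conclude via the comparison with $\widetilde J$; the paper writes the coercivity step through the symmetrized functional $J^{|.|}$ and re-runs the argument of Lemma \ref{lemtilde1} at the limit point, whereas you compute the crossing point $\phi(m)$ explicitly and simply cite $\bar J\leq\widetilde J$, but these are only cosmetic differences.
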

\begin{proof}
The proof is close to the one of Lemma \ref{lemtilde1}. We may of course assume that the left hand side is finite, denoted by $C(z_0)$. As usual, for a fixed $\varepsilon >0$, we may find a sequence $(z_n)_{n\ge0}$ such that for any $n\in\mathbb N$, $z_n\geq z_0$ and $\inf_{z \geq z_0} \bar J^n(z)+\varepsilon \geq \bar J^n(z_n)$, so that $\limsup_{n\to\infty} \bar J^n(z_n) \leq C(z_0)+\varepsilon$.

We want to show that the sequence $(z_n)_{n\ge0}$ is bounded. The key point is to remark that, taking the sign of $y$ into account
\begin{eqnarray*}
x+zy-\beta \log \mathbb E\left(e^{x\tau +y W^n}\right) &\geq& x+zy-\beta \log \mathbb E\left(e^{x\tau +|y| |W^n|}\right) \\ &\geq& x+zy-\beta \log \mathbb E\left(e^{x\tau +|y| |W|}\right)
\end{eqnarray*}
so that for all $n$, $$\bar J^n(z) \geq J^{|.|}(z):= \inf_{\beta>0} \, \sup_{x \in \mathbb R,y\geq 0} \, \left\{x+|z|y - \beta  \log \mathbb E\left(e^{x\tau +y |W|}\right)\right\} \, .$$ As before, taking $y=0$ we see that the infimum in $\beta$ has to be taken for $\beta \leq \beta_\tau=C(z_0)+1-x_\tau$, at least for $n$ large enough.

Taking $x=0$ we see that $J^{|.|}(z) \leq C(z_0)+\varepsilon$ implies $$|z| (\eta_0/2) \leq C(z_0)+ \beta_\tau \, \log \mathbb E\left(e^{(\eta_0/2) |W|}\right) \, ,$$  i.e $|z|\leq A$ for some positive $A$ that does not depend on $n$. This shows that $(z_n)_{n\ge0}$ is bounded, so that taking a subsequence if necessary $z_n \to z_{lim} \geq z_0$.

Consider $\bar J(z_{lim})$.  We may now mimic the proof of Lemma \ref{lemtilde1} replacing $m_n$ by $z_n$ and $m$ by $z_{lim}$, so that 
$$\inf_{z\geq z_0} \bar J(z) \leq \bar J(z_{lim}) \leq C(z_0)+ \varepsilon\,.$$
It remains to let $\varepsilon$ go to $0$.
\end{proof}
\medskip

\section{Application to Hawkes processes. Corrigendum.}

In \cite{cattiaux_costa_colombani} Theorem 2.12 and Corollary 2.13, we gave an application to Hawkes processes of our main results, with a \emph{wrong} bound. \\
As we have seen the correct one in Theorem 2.12 is $(1-\kappa)\theta_0 a/4$ ($\theta_0$ there is $\eta_0$ in the present paper), the factor $1/4$ is missing in \cite{cattiaux_costa_colombani}. The correct term in Corollary 2.13 is also $(1-\kappa)\theta_0 a/4$. Indeed according to equation (2.9) therein, $N_t^h=\hat N^h_t + R_t^h$ with $0\leq R_t^h \leq W_{M_t^h+1}$. If $W$ is bounded we may thus write $N_t^h=\mu^\varepsilon_t(\varphi) + A_t^\varepsilon$ where $A_t^\varepsilon \leq \frac Kt \, ((M_t-M_t^\varepsilon)+2)$, so that the proof of Theorem \ref{thmwbound} remains valid replacing $Z_t/t$ by $N^h_t/t$.

Also remark that we have to replace $J$ by $\bar J$, i.e. take care of the case $z=0$, even if here $m>0$ since $W\geq 0$ and $W\neq 0$.
\bigskip

\bigskip

\end{document}